\theoremstyle{plain}
\newtheorem{theorem}{Theorem}
\newtheorem{lemma}[theorem]{Lemma}
\newtheorem{proposition}[theorem]{Proposition}
\newtheorem{remark}[theorem]{Remark}
\newtheorem{corollary}[theorem]{Corollary}
\begin{document}

\noindent{\Large
Degenerations  of nilalgebras}\footnote{
The work is supported by    FCT   UIDB/MAT/00212/2020 and UIDP/MAT/00212/2020.
}

 \bigskip

\begin{center}

 {\bf
 
Ivan Kaygorodov\footnote{CMA-UBI, University of Beira Interior, Covilh\~{a}, Portugal; \    kaygorodov.ivan@gmail.com} \&
 Oleg Shashkov  \footnote{Financial University, Moscow, Russia; \ o.v.shashkov@yandex.ru}  }

 \bigskip

\end{center}

\noindent {\bf Abstract:}
{\it  
All complex $3$-dimensional nilalgebras were described. As a corollary, 
all degenerations in the variety of complex $3$-dimensional nilalgebras were obtained.
 }

 \bigskip

\noindent{\bf Keywords}: nilalgebra, algebraic classification, geometric classification, degeneration.

 \bigskip

\noindent {\bf MSC2020}:  
17A30 (primary);
14L30 (secondary).

 \bigskip

\section*{Introduction}
An element $x$ is nil, if there exists a number $n,$ such that for each $k\geq n$ we have $x^k=0$\footnote{By $x^k$ we mean all possible arrangement of non-associative products}.
An algebra is called a nilalgebra if each element is nil. 
The class of nilalgebras plays an important role in the ring theory.
So, Köthe's problem is one of the old problems in rings and modules theory that has not yet been solved. 
A problem of the existence of simple associative nil rings was actualized by Kaplansy and successfully solved by Smoktunowicz in \cite{S}. 
Another famous problem was posted by  Albert: 
is every finite-dimensional (commutative) power associative nilalgebra solvable?
It is still open, but it was solved in some particular cases \cite{QG}.
In the present note, we give a positive answer to the problem of Albert 
for non-anticommutative $3$-dimensional algebras.
Let us note, that in the anticommutative case, the problem of Albert does not make sense:
each anticommutative algebra is nilalgebra  with nilindex $2$ and in almost all dimensions there are simple Lie algebras, that are not solvable.
On the other hand, for each $n>3$,
Correa and Hentzel constructed a non-(anti)commutative $n$-dimensional non-solvable nilalgebra \cite{ch}.
To do it, we obtain the full classification of complex $3$-dimensional nilalgebras
and as a result, we have a geometric classification and the description of all degenerations in the variety of complex $3$-dimensional nilalgebras.
In particular, we proved that this variety of algebras has two rigid algebras.
Let us note, that the classification of $4$-dimensional commutative nilalgebras is given in \cite{ger75}.
It is known that each right Leibniz algebra (i.e., an algebra satisfying 
the identity $x(yz)=(xy)z+y(xz)$; about Leibniz algebras see, for example, \cite{LS} and references therein)  satisfies the identity $x^2x=0.$
Hence, the variety of symmetric Leibniz algebras (i.e., left and right Leibniz algebras) 
gives a subvariety in the variety of nilalgebras. Thanks to \cite{bkm22}, the intersection of right mono Leibniz (i.e. algebras where each one-generated subalgebra is a right Leibniz algebra) and left mono Leibniz algebras gives the 
variety of nilalgebras  with nilindex $3.$
As one more corollary from our result, we have the algebraic and geometric classification of 
$3$-dimensional symmetric mono Leibniz algebras.

\section{The algebraic classification of $3$-dimensional nilalgebras}

\subsection{Nilalgebras  with nilindex $3$.}\label{nil3}
By identity $x^3=0$ we mean the system of two identities 
\begin{center}
$x^2x=0$ and $xx^2=0$.\end{center}
Linearizing them, we obtain a pair of useful identities
\begin{equation}
	(x y+yx)x=-x^2y \mbox{ \ and \ } x(x y+yx)=-yx^2.\label{x3r}
\end{equation}
The full linearization gives the following two identities
\begin{longtable}{ccc}
$\sum\limits_{\sigma \in \mathbb S_3} (x_{\sigma(1)}x_{\sigma(2)})x_{\sigma(3)}=0$& 
 \ and \ & $\sum\limits_{\sigma \in \mathbb S_3} x_{\sigma(1)}(x_{\sigma(2)}x_{\sigma(3)})=0.$
\end{longtable}

We aim to classify all complex $3$-dimensional algebras ${\rm A}$ satisfying  $x^3=0.$ Obviously, each anticommutative algebra has this property. These algebras were classified in \cite{ikv19}. Hence, we will consider only non-anticommutative cases, i.e. algebras where there is an element $a$, such that $a^2\neq0$. It is easy to see, that $a$ and $a^2$ are linearly independent and (\ref{x3r}) gives $a^2a^2=0.$ Consider a basis $\{a,a^2,b\}$ of the algebra $\rm A,$ then we have the following statement.

\begin{lemma}\label{l:N_i}
	For the algebra $\rm A$ with basis $\{a,a^2,b\}$ there are the following three cases
\begin{longtable}{lll}	
$(T_0): \  ba=0,$& $ (T_a): \ ba=a,$& $ (T_b): \ ba=b.$\end{longtable}
\end{lemma}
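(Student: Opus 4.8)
We have a $3$-dimensional non-anticommutative nilalgebra $\mathrm{A}$ with nilindex $3$, an element $a$ with $a^2\neq 0$, and a basis $\{a, a^2, b\}$. The quantity $ba$ is some vector in $\mathrm{A}$; the claim is that after an appropriate change of the "free" basis vector $b$ we may assume $ba$ lands in one of three normal forms: $0$, $a$, or $b$.

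The first step is to understand the action of right multiplication by $a$. Consider the linear map $R_a\colon \mathrm{A}\to\mathrm{A}$, $x\mapsto xa$. From $a^2a=0$ we get $R_a(a^2)=0$, and from $a\cdot a = a^2$ we get $R_a(a)=a^2$. So $R_a$ already acts on $\mathrm{span}\{a,a^2\}$ as a nilpotent Jordan block, and in particular $a^2$ is in the kernel of $R_a$. Now write $ba = \alpha a + \beta a^2 + \gamma b$ for scalars $\alpha,\beta,\gamma$. The nil identity applied to $a+b$ or to general elements should constrain these: in particular, since $\mathrm{A}$ is a nilalgebra, every element is nil, so $R_x$ restricted to the one-generated subalgebra generated by $x$ is nilpotent for each $x$; applying this to suitable combinations like $\lambda a + b$ and using $a^2a^2=0$ together with \eqref{x3r} should force relations among $\alpha,\beta,\gamma$. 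The key structural fact I expect to extract is that, modulo the span of $a,a^2$, the map "right multiplication by $a$" acting on $b$ behaves like multiplication by an eigenvalue: i.e. the $\gamma$ appearing in $ba=\alpha a+\beta a^2+\gamma b$ is essentially canonical (the eigenvalue of $R_a$ on the quotient $\mathrm{A}/\mathrm{span}\{a,a^2\}$), while $\alpha$ and $\beta$ can be adjusted by replacing $b$ with $b' = b + \mu a + \nu a^2$.

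The second step is the normalization itself. Replacing $b$ by $b'=b+\mu a + \nu a^2$ (which keeps $\{a,a^2,b'\}$ a basis since $b'$ is still independent of $a,a^2$), we compute
\[
b'a = ba + \mu\,a^2 + \nu\,(a^2 a) = (\alpha a + \beta a^2 + \gamma b) + \mu\, a^2 = \alpha a + (\beta+\mu)a^2 + \gamma(b' - \mu a - \nu a^2).
\]
So in terms of $b'$ the coefficients become $\alpha - \gamma\mu$ (of $a$), $\beta + \mu - \gamma\nu$ (of $a^2$), and $\gamma$ (of $b'$). If $\gamma\neq 0$: choose $\mu = \alpha/\gamma$ to kill the $a$-coefficient, then choose $\nu=(\beta+\mu)/\gamma$ to kill the $a^2$-coefficient, leaving $b'a = \gamma b'$; finally rescale $b'$ — wait, rescaling $b'$ does not change $\gamma$, so we must show $\gamma$ must itself be a specific value. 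This is where the nil identity re-enters: since $b'$ generates a one-generated subalgebra on which multiplication must be nilpotent, $b'a=\gamma b'$ forces... actually $b'a$ need not lie in the subalgebra generated by $b'$ alone, so instead apply \eqref{x3r} to $x=b'$: from $(b'b'+b'b')b' = -b'^2 b'$ and $b'b'^2=0$ we don't directly see $a$. The real constraint: apply nilpotency to $a+tb'$ for a formal parameter $t$; expanding $(a+tb')^3=0$ and collecting, the presence of the $\gamma b'$ term in $b'a$ together with $a^2a^2=0$, $aa^2=a^2a=0$-type relations should force $\gamma\in\{0,1\}$ (a quadratic $\gamma^2=\gamma$ or $\gamma(\gamma-1)=0$ type equation), since any nonzero eigenvalue of $R_a$ on a nil structure is pinned down by the cubic identity. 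When $\gamma=1$ we are in case $(T_b)$ (after the translation above, possibly also needing to clear an $a$-term — if the $a$-coefficient cannot be removed because $\gamma=1$ makes $\alpha-\gamma\mu$ still adjustable, it can; then we must double-check whether case $(T_a)$ arises from a different sub-case, namely $\gamma=0$ but $\alpha\neq 0$, where $b'a = \alpha a + (\beta+\mu)a^2$, choose $\mu=-\beta$ to get $b'a=\alpha a$, then rescale $b'$ — rescaling $b'$ by $c$ gives $(cb')a = \alpha(cb')$, doesn't change $\alpha$ either, so again $\alpha$ must be pinned to $1$ by a nil identity). When $\gamma=0$ and the $a$-coefficient also vanishes (or can be killed), we land in $(T_0)$.

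The main obstacle is establishing that the relevant coefficient ($\gamma$, resp. $\alpha$) is forced by the identity $x^3=0$ to take the value $0$ or $1$ — i.e. that there is no genuine one-parameter family here. Concretely I expect to set $x = a + t b$, expand $x^2$, then $x^2 x$ and $x x^2$, substitute $a^2a^2=0$ and the case-by-case value of $ba$, and read off a polynomial identity in $t$ whose coefficients must all vanish; the coefficient of $t$ or $t^2$ will give the desired algebraic equation on the scalars. I would also need to double-check that the three cases are genuinely exhaustive and that the change of basis $b\mapsto b+\mu a+\nu a^2$ together with scaling exhausts all freedom; a clean way to organize this is: first reduce to $ba=\alpha a+\gamma b$ (kill $a^2$-coefficient by translation), then split on $\gamma=0$ vs $\gamma\neq 0$, showing $\gamma=1$ in the latter and $\alpha\in\{0,1\}$ in the former. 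The bookkeeping is routine once the pinning-down equation is in hand; that derivation from $x^3=0$ is the crux.
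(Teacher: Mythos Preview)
Your translation computation is right, and splitting on the $b$-coefficient $\gamma$ of $ba$ is the correct organization, but the normalization afterward is mishandled and the detour through the nil identity is a red herring. In the case $\gamma=0$, $\alpha\neq 0$, your assertion that rescaling $b'$ leaves $\alpha$ fixed is simply false: if $b'a=\alpha a$ and $b''=cb'$, then $b''a=c\alpha a$, so taking $c=\alpha^{-1}$ gives $b''a=a$, which is case $(T_a)$. (You wrote $(cb')a=\alpha(cb')$, apparently conflating $a$ with $b'$.) In the case $\gamma\neq 0$ you are correct that rescaling $b'$ alone does not change $\gamma$ once $b'a=\gamma b'$; what you are missing is that the lemma also allows rescaling $a$. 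Replacing $a$ by $a/\gamma$ (hence $a^2$ by $a^2/\gamma^2$) still gives a basis of the required shape $\{a,a^2,b'\}$, and now $b'(a/\gamma)=b'$, i.e.\ case $(T_b)$. The paper does exactly this, only in the opposite order: it rescales $a$ first to force $\gamma=1$, then translates $b$.

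The attempt to pin down $\gamma$ (or $\alpha$) via $(a+tb)^3=0$ could not succeed as written anyway: at this stage the products $ab$, $a^2b$, $ba^2$, $b^2$ are all still undetermined, so the expansion drags in many unknowns and no clean equation like $\gamma(\gamma-1)=0$ falls out. The lemma is purely a normal-form statement, using only $a^2a=0$ in the translation step; the identity $x^3=0$ is exploited only in the subsequent lemmas.
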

\begin{proof}
	Let $ba=\lambda a+\mu a^2+\nu b$. Suppose first that $\nu\neq0$.
	By using the  substitution $\frac{a}{\nu}\mapsto a$,  we can suppose that  $\nu=1$. Let $b'=b+\lambda a+(\lambda+\mu)a^2$, then 
	\begin{equation*}
		b'a=(b+\lambda a+(\lambda+\mu)a^2)a=\lambda a+\mu a^2+b+\lambda a^2=b'.
	\end{equation*}
	Thus, for $\nu\neq0$ the algebra $\rm A$ by substitution $b'\mapsto b$ is of type $(T_b)$.
	Let now $\nu=0$. 
	Denote $b'=-\mu a+b$, then $$b'a=-\mu a^2+\lambda a+\mu a^2=\lambda a.$$
	Thus, for $\lambda=0$ the algebra $\rm A$ by the substitution $b'\mapsto b$ is of type $(T_0)$, and for $\lambda\neq0$ by the substitution $\frac{b'}{\lambda}\mapsto b$ to type $(T_a)$.
\end{proof}
\subsubsection{Commutative nilalgebras  with nilindex 3}
\begin{lemma}
	If the algebra $\rm A$ is commutative, then in some basis $\{a,a^2,b\}$ it is determined by one of the multiplication tables:
	\begin{longtable}{lcl}
		$C_{0,0}$&$:$&$ab=a^2b=0, \ b^2=0;$\\
		$C_{0,a^2}$&$:$& $ ab=a^2b=0, \  b^2=a^2.$
	\end{longtable}
\end{lemma}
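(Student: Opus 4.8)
The plan is to use Lemma \ref{l:N_i} to split into the three types $(T_0)$, $(T_a)$, $(T_b)$ for the product $ba$, and to show that only $(T_0)$ can actually host a commutative nilalgebra. Since ${\rm A}$ is commutative we have $ab=ba$, $a^2b=ba^2$, and $a\cdot a^2=a^2\cdot a=a^2\cdot a^2=0$ (the last being recorded in the excerpt), so once the value of $ba$ is fixed the whole table is determined by the coordinates of $a^2b$ and of $b^2$ in the basis $\{a,a^2,b\}$. I first note that the basis changes used in the proof of Lemma \ref{l:N_i} only rescale $a$ and shift $b$ by elements of $\langle a,a^2\rangle$, hence preserve commutativity, so we may legitimately assume ${\rm A}$ is of one of the three types.

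For type $(T_b)$, i.e. $ab=ba=b$: evaluating the identity $(xy+yx)x=-x^2y$ from (\ref{x3r}) at $x=a$, $y=b$ gives $2b=-a^2b$, so $a^2b=-2b$; evaluating the same identity at $x=b$, $y=a$ and writing $b^2=\delta a+\epsilon a^2+\zeta b$ forces $\delta=\epsilon=\zeta=0$, i.e. $b^2=0$. The table is now completely pinned down, and the element $x=a^2+b$ has $x^2=-4b$ and therefore $x^2x=8b\neq0$, contradicting $x^2x=0$. Type $(T_a)$, i.e. $ab=ba=a$, is ruled out the same way: (\ref{x3r}) at $x=a,y=b$ forces $a^2b=-2a^2$, and at $x=b,y=a$ forces $b^2=\epsilon a^2-2b$ for some $\epsilon$; but then $b^2b=-4\epsilon a^2+4b\neq0$, again contradicting $x^2x=0$. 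Hence ${\rm A}$ must be of type $(T_0)$: $ab=ba=0$.

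In type $(T_0)$, the identity (\ref{x3r}) at $x=a$, $y=b$ reads $0=-a^2b$, so $a^2b=ba^2=0$; at $x=b$, $y=a$ it reads $0=-b^2a$, and writing $b^2=\delta a+\epsilon a^2+\zeta b$ this forces $\delta=0$, so $b^2=\epsilon a^2+\zeta b$. Finally $x^2x=0$ at $x=b$ gives $b^2b=\zeta\epsilon a^2+\zeta^2b=0$, hence $\zeta=0$ and $b^2=\epsilon a^2$. One then verifies directly that this table really does define a nilalgebra: for $x=\xi a+\eta a^2+\theta b$ all the cross terms vanish, so $x^2=(\xi^2+\epsilon\theta^2)a^2$, and since $a^2$ annihilates $a$, $a^2$ and $b$, both $x^2x$ and $xx^2$ vanish. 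If $\epsilon=0$ this is $C_{0,0}$. If $\epsilon\neq0$, replacing $b$ by $\epsilon^{-1/2}b$ (a legitimate change of basis over $\mathbb{C}$) turns $b^2=\epsilon a^2$ into $b^2=a^2$ while keeping $ab=a^2b=0$, producing $C_{0,a^2}$.

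The essentially only delicate point is the bookkeeping in types $(T_a)$ and $(T_b)$: one must check that the consequences (\ref{x3r}) of $x^3=0$ are already strong enough to determine $a^2b$ and $b^2$ uniquely, so that the forced candidate table can then be exhibited as violating $x^2x=0$ by a single explicit element ($a^2+b$ in $(T_b)$, and $b$ itself in $(T_a)$). A minor but genuine point is that (\ref{x3r}) is only a consequence of $x^3=0$, so in type $(T_0)$ the direct verification that the resulting table is a nilalgebra cannot be skipped; fortunately it is immediate here because $a^2\cdot{\rm A}=0$.
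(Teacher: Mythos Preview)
Your proof is correct and follows essentially the same route as the paper: split according to Lemma~\ref{l:N_i}, use the linearized identity~(\ref{x3r}) in each type to pin down $a^2b$ and $b^2$, eliminate $(T_a)$ and $(T_b)$ by exhibiting explicit violations of $x^2x=0$ (via $b$ and $a^2+b$ respectively), and in $(T_0)$ normalize $b^2=\epsilon a^2$ by rescaling. The paper additionally records that $C_{0,0}$ and $C_{0,a^2}$ are non-isomorphic (via annihilator dimensions), while you add the converse verification that the $(T_0)$ tables really are nilalgebras; neither changes the argument's shape.
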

\begin{proof}
	Using the identity \eqref{x3r} and commutativity, we have
	\begin{longtable}{lclclcl}
		$a^2b$&$=$&$-2(ba)  a,$ & \ \ \ &
		$b^2a$&$=$&$-2(ab)  b.$
	\end{longtable}
	Hence, we have the following relations in the three cases considered:
	\begin{longtable}{lcllcllcl}
		$(T_0)$&$:$&$
		\begin{cases}
			a^2b=0,\\
			b^2a=0;
		\end{cases}$ &
		$(T_a)$&$:$&$
		\begin{cases}
			a^2b=-2a^2,\\
			b^2a=-2a;
		\end{cases}$ & $
		(T_b)$&$:$&$
		\begin{cases}
			a^2b=-2b,\\
			b^2a=-2b^2.\end{cases}$
		\end{longtable}

	It remains to determine  $b^2$ in each of the three cases.
	Let us say $b^2=\lambda a+\mu a^2+\nu b.$

\begin{enumerate}
    \item 
	[\bf Case $(T_0)$.]
	First, $b^2a=0$. 
	On the other hand, $b^2a=\lambda a^2$.
	Therefore, $\lambda=0$ and $b^2=\mu a^2+\nu b.$
	Further, $b^2b=0$ and $b^2b=\nu b^2=\nu\mu a^2+\nu^2b$.
	So $\nu=0$ and $b^2=\mu a^2$.
	Thus we get two cases: either $b^2=0$, or (after substituting $\frac{b}{\sqrt{\mu}}\mapsto b$) $b^2=a^2$.
	So, in this case we have two algebras: $C_{0,0}$ and $C_{0,a^2}$.
	
	\item[{\bf Case $(T_a)$.}]
	First note that $b^2a=-2a$.
	On the other hand, $b^2a=\lambda a^2+\nu a$. 
	So $b^2=\mu a^2-2b.$
	Further, $0=b^2b=\mu a^2b-2b^2=-2\mu a^2-2\mu a^2+4b$.
	Thus, the case $(T_a)$ is not realized in commutative algebras.
	
	\item[{\bf Case $(T_b)$.}]
	We have $b^2a=\lambda a^2+\nu b$ and $b^2a=-2(\lambda a+\mu a^2+\nu b)$. 
	Then $\lambda=2\mu=3\nu=0$, i.e. $b^2=0.$
	Further, if $x=a^2+b$, then $x^2=2a^2b=-4b$, i.e. $0=x^2x=8b$.
	Thus, the case $(T_b)$ is neither realized in commutative algebras.
\end{enumerate}

	It is easy to check that $\dim \operatorname{Ann}C_{0,0}=2$ and  $\dim \operatorname{Ann}C_{0,a^2}= 1$. Hence, they are non-isomorphic.
\end{proof}

\subsubsection{Noncommutative nilalgebras  with nilindex 3.}
Let us consider the algebra ${\rm A}^+$ 
with the multiplication  $x\cdot y=\frac{1}{2}(xy+yx)$. 
It is easy to see, that  \({\rm A}^+\) is commutative and satisfies the identities $x^3=0$.

\begin{lemma}\label{l:4:2}
	If \(\rm A\) be  a noncommutative nilalgebra with nilindex $3$ and  \({\rm A}^+\)
	has type   $C_{0,0}$ or  $C_{0,a^2}$ regarding a basis  \(\{a,a^2,b\}\),
	then it satisfies:
	\begin{enumerate}
\item		 $b^2=0$ \text{ \ or  \  } $b^2=a^2,$
		\item $ba=-ab,$
		\item $a^2a^2=a^2b=ba^2=0.$

	\end{enumerate}
\end{lemma}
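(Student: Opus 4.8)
The plan is to convert the hypothesis on \({\rm A}^+\) into explicit relations in \({\rm A}\) using the polarization formula \(x\cdot y=\tfrac12(xy+yx)\), and then let the nilindex-\(3\) identities (\ref{x3r}) finish everything off. First I would record what the hypothesis gives: since \(a\cdot a=\tfrac12(aa+aa)=a^2\), the symbol ``\(a^2\)'' appearing in the tables \(C_{0,0}\) and \(C_{0,a^2}\) is literally the square of \(a\) in \({\rm A}\), so it is legitimate to read those tables in the basis \(\{a,a^2,b\}\) that comes from \({\rm A}\). Both tables give \(a\cdot b=0\), \(a^2\cdot b=0\) and \(b\cdot b\in\{0,a^2\}\). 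The last of these is exactly \(b^2=bb=b\cdot b\in\{0,a^2\}\), which is item (1). The first is \(0=a\cdot b=\tfrac12(ab+ba)\), i.e. \(ba=-ab\), which is item (2).

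For item (3) I would argue as follows. That \(a^2a^2=0\) has already been observed before the lemma (it follows from (\ref{x3r}) with \(x=a,\ y=a^2\), using \(aa^2=a^2a=0\)). For the remaining two products I would substitute \(x=a,\ y=b\) into the two forms of (\ref{x3r}): this yields \((ab+ba)a=-a^2b\) and \(a(ab+ba)=-ba^2\), and since \(ab+ba=0\) by item (2) the left-hand sides vanish, giving \(a^2b=0\) and \(ba^2=0\). (Alternatively one may deduce \(ba^2=-a^2b\) directly from the relation \(a^2\cdot b=0\) in \({\rm A}^+\).) This completes all three parts.

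Every step above is a one-line computation, so I do not expect a genuine obstacle here; the only point that deserves a moment's care is the remark that the ``\(a^2\)'' occurring in the multiplication table of \({\rm A}^+\) agrees with the square of \(a\) in \({\rm A}\) — this is what makes the phrase ``regarding a basis \(\{a,a^2,b\}\)'' meaningful and lets the rest of the verification go through mechanically.
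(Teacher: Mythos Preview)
Your argument is correct and essentially the same as the paper's: both read off items (1) and (2) directly from the polarization formula and the tables $C_{0,0}$, $C_{0,a^2}$, and both obtain $a^2b=0$ from (\ref{x3r}) with $x=a,\ y=b$ together with $ab+ba=0$. The only cosmetic difference is that for $ba^2=0$ the paper uses the relation $a^2\cdot b=0$ in ${\rm A}^+$ (your ``alternative''), and for $a^2a^2=0$ the paper re-derives it as $(a\cdot a)\cdot(a\cdot a)=0$ rather than citing the earlier observation.
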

\begin{proof}
	It is easy to see, that 
	$a\cdot a=a^2$ and $b^2=b\cdot b=c$, where $c=0$ or $c=a^2$.
From  $a \cdot b=0$ follows  $ab=-ba.$
The relation \eqref{x3r} gives 
	$a^2b=-2(a\cdot b)a=0$ and on the other hand, we have
 $a^2 b+b a^2=2(a\cdot a)\cdot b=0$, then $ba^2=-a^2b=0$.
	For the end, we obtain $a^2a^2=(a\cdot a)\cdot (a\cdot a)=0$.
\end{proof}

To summarize, for a description of the multiplication table of \(\rm A\) in the basis \(\{a,a^2,b\}\), we have to determine the value of $ab$.
Let
$ab=\lambda a+\mu a^2+\nu b.$

Let us define by $N_{c,d}$ a $3$-dimensional algebra with a basis
 \( \{a,a^2,b\}\) and the multiplication table given below:
 \[aa=a^2, \ ab=-ba=c, \   b^2=d.\]

\begin{proposition}\label{p:4:1}
	If ${\rm A}^+ \cong C_{0,0}$, then ${\rm A}$ is isomorphic to $C_{0,0},$
	$N_{a,0},$
	$N_{b,0},$ or $
	N_{a^2,0}.$\end{proposition}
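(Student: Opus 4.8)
The plan is to push the reduction of Lemma~\ref{l:4:2} to its end. We may assume $\rm A$ noncommutative (otherwise $\rm A = {\rm A}^+ \cong C_{0,0}$ and there is nothing to prove), so by Lemma~\ref{l:4:2} all products of $\rm A$ in the basis $\{a,a^2,b\}$ are fixed except $ab$; writing $ab = \lambda a + \mu a^2 + \nu b$ (and $ba = -ab$) we have $\rm A = N_{c,0}$ with $c = ab$. I would first observe that the identity $x^3 = 0$ is then \emph{automatically} satisfied by every member of this family: for $x = \alpha a + \beta a^2 + \gamma b$ the terms $\alpha\gamma\,ab$ and $\gamma\alpha\,ba$ cancel, $b^2 = 0$, and $a^2$ annihilates $a,a^2,b$, so $x^2 = \alpha^2 a^2$ and hence $x^2x = xx^2 = 0$ regardless of $c$. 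Thus the proposition is purely a normal-form statement: $c$ should be brought to one of $0, a, b, a^2$.

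Next I would determine which changes of basis keep the shape $N_{\bullet,0}$. For a new generator $\bar a = \xi a + \eta a^2 + \zeta b$ one computes $\bar a\,\bar a = \xi^2 a^2$ (again the $ab$ and $ba$ terms cancel), so $\xi \neq 0$; and for $\bar b = p a + q a^2 + r b$ one gets $\bar b\,\bar b = p^2 a^2$, so to preserve $\bar b^{\,2}=0$ one is \emph{forced} to take $p = 0$ (and $r \neq 0$ for a basis). For such a change one has $\bar a\,\bar b + \bar b\,\bar a = 0$ and $\bar a\,\bar b = \xi r\, c$, and expressing $a, a^2, b$ through $\bar a, \bar a^2, \bar b$ gives the transformation law
\[
\lambda \ \longmapsto\ r\lambda, \qquad
\nu \ \longmapsto\ \xi\nu - \lambda\zeta, \qquad
\mu \ \longmapsto\ \frac{r\mu}{\xi} - \frac{r\lambda\,\eta}{\xi^{2}} + \frac{(\lambda\zeta - \xi\nu)\,q}{\xi^{2}}.
\]
The relevant features are that $\lambda$ only rescales, $\nu$ rescales up to a multiple of $\lambda$ (adjustable through $\zeta$), and the image of $\mu$ carries the free term $-r\lambda\eta/\xi^2$ (usable when $\lambda \neq 0$) and, in the case $\lambda = 0$, the free term $-\nu q/\xi$ (usable when $\nu \neq 0$).

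The case analysis is then routine. If $\lambda \neq 0$: rescale by $r = 1/\lambda$ to get $\lambda = 1$, choose $\zeta$ to kill $\nu$, then $\eta$ to kill $\mu$; so $c = a$ and $\rm A \cong N_{a,0}$. If $\lambda = 0 \neq \nu$: rescale by $\xi = 1/\nu$ to get $\nu = 1$, then choose $q$ to kill $\mu$; so $c = b$ and $\rm A \cong N_{b,0}$. If $\lambda = \nu = 0 \neq \mu$: rescale to $c = a^2$, so $\rm A \cong N_{a^2,0}$. Since $\rm A$ is noncommutative we have $c \neq 0$, so these exhaust the cases; including $C_{0,0} = N_{0,0}$ in the list makes it cover the commutative case as well. (These four are moreover pairwise non-isomorphic: $C_{0,0}$ is commutative, $\dim {\rm A}^2$ separates $\{C_{0,0}, N_{a^2,0}\}$ from $\{N_{a,0}, N_{b,0}\}$, and $\langle a, a^2\rangle$ is an ideal in all of them except $N_{b,0}$ — though the statement as phrased needs only that every such $\rm A$ occurs in the list.) The step I expect to cost the most care is the middle one: seeing that the $a$-component of $\bar b$ is forced to vanish, and checking that the $a^2$-component $\mu$ can always be absorbed using the remaining parameters; everything else is bookkeeping.
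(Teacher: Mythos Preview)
Your proposal is correct and follows essentially the same approach as the paper: parametrize $\rm A$ by $ab=\lambda a+\mu a^2+\nu b$, determine the admissible basis changes preserving the shape (forcing the $a$-component of the new $b$ to vanish, exactly the paper's $\alpha_2=0$), derive the transformation law for $(\lambda,\mu,\nu)$, and normalize case by case. Your write-up is in fact a bit tidier: you rule out $r=0$ immediately (the paper carries an ostensible case $\gamma_2=0$ which cannot actually produce an isomorphism once $\alpha_2=0$), and your extra remarks that $x^3=0$ is automatic for every $N_{c,0}$ and that the four outcomes are pairwise non-isomorphic are correct bonuses (the paper postpones the latter to the next proposition).
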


\begin{proof}
Let \(\rm A\) be an algebra with a basis  \(\{a,a^2, b\}\) and the following relation
	\[-ba=ab=\lambda a+\mu a^2+\nu b,\]
and  \({\rm A}'\) be an algebra with the basis  \(\{a',a'^2, b'\}\) and the following relation
	\[-b'a'=a'b'=\lambda' a'+\mu' a'^2+\nu' b'.\]
	
We suppose, that \({\rm A}' \cong  \rm A\) and let	  
\begin{longtable}{rcl}
$\xi(a')$&$=$&$\alpha_1a+\beta_1a^2+\gamma_1b,$\\
$\xi(b')$&$=$&$\alpha_2a+\beta_2a^2+\gamma_2b.$
\end{longtable}

\noindent	Then,
	$0=\xi(0)=\xi(b'^2)=(\xi(b'))^2=\alpha_2^2a^2$
and \(\alpha_2=0\).
By a semilar way, $\xi(a'^2)=(\xi(a'))^2=\alpha_1^2a^2.$
	
	It is easy to see that
	\begin{longtable}{rclcl}
		$\xi(a') \xi (b')$&$=$&$ (\alpha_1a+\beta_1a^2+\gamma_1b)(\beta_2a^2+\gamma_2b)$&$=$&$\alpha_1\gamma_2 ab=\alpha_1\gamma_2(\lambda a+\mu a^2+\nu b).$
	\end{longtable}
	On the other hand, 
	\begin{longtable}{rclcl}
		$\xi(a'b')$&$=$&$
		\xi(\lambda' a'+\mu' a'^2+\nu' b')$&$=$&$
		\lambda'(\alpha_1a+\beta_1a^2+\gamma_1b)+
		\mu'\alpha_1^2a^2+
		\nu'(\beta_2a^2+\gamma_2b).$
	\end{longtable}
	The last two relations give the following system of equalities:
\begin{longtable}{rcl}
		$\alpha_1\gamma_2\lambda$&$=$&$\lambda'\alpha_1,$\\
$\alpha_1\gamma_2\mu$&$=$&$\lambda'\beta_1+
		\mu'\alpha_1^2+
		\nu'\beta_2,$\\
$\alpha_1\gamma_2\nu$&$=$&$\lambda'\gamma_1+
		\nu'\gamma_2.$
	\end{longtable}

Since,  \(\alpha_2=0\)  and $\dim {\rm Im} \xi=3,$  then  \(\alpha_1\ne0\). Then we have 
\begin{longtable}{rclrclrcl}
		$\lambda'$&$=$&$\gamma_2\lambda,$ &\ 
		$\mu'\alpha_1^2+\nu'\beta_2$&$=$&$
		\gamma_2(\alpha_1\mu-\beta_1\lambda),$ & \ 
		$\nu'\gamma_2$&$=$&$\gamma_2(\alpha_1\nu-\gamma_1\lambda).$
	\end{longtable}
 
	\begin{enumerate}

	\item If \(\gamma_2=0\), then \(\lambda'=0\) and \(\mu'=-\frac{\beta_2}{\alpha_1^2}\nu'\).
	It follows,
	\[a'b'=\lambda' a'+\mu' a'^2+\nu' b'=\left( b'-\frac{a'^2}{\alpha'^2}\right) \nu'.\]
	Let \(b''=b'-\frac{a'^2}{\alpha'^2}\), 
	then we obtain
\begin{center}
    $a'b''=a'b'=\nu' b'',$ \ $b''^2=a'^2b''=b''a'^2=0$ \mbox{ \ and \ } $b''a'=-a'b''.$
\end{center}

 \begin{enumerate}
     \item  If  \(\nu'=0\), then we have the algebra \(C_{0,0}\) with the basis \(\{a',a'^2,b''\}\).
	
	\item\label{1b}  If \(\nu'\ne 0\), then denoting \(a''=\frac{a'}{\nu'}\), we obtain  \(a''b''=b''\).
	Hence, we have the algebra \(N_{b'',0}\) with basis \(\{a'',a''^2,b''\}\).
	
 \end{enumerate}

	\item  If \(\gamma_2\ne0\), then
	\begin{longtable}{rclrclrcl}
		$\lambda'$&$=$&$\gamma_2\lambda,$ & \
		$\mu'\alpha_1^2 $&$=$&$
		\gamma_2(\alpha_1\mu-\beta_1\lambda)-\beta_2(\alpha_1\nu-\gamma_1\lambda),$ & \ 
		$\nu' $&$=$&$\alpha_1\nu-\gamma_1\lambda.$
	\end{longtable}

  \begin{enumerate}
     \item  If \(\lambda\ne0\), 
	then by choosing 
\(\gamma_2=\frac{1}{\lambda}\),
\(\gamma_1=\frac{\alpha_1\nu}{\lambda}\),
  and
\(\beta_1=\frac{\alpha_1\mu}{\lambda}-\frac{\beta_2}{\gamma_2\lambda}(\alpha_1\nu-\gamma_1\lambda)\), 
	we have \(\lambda'=1\),  \(\nu'=0\), and  \(\mu'=0\).
Hence, we have  the algebra \(N_{a',0}\) with basis  \(\{a',a'^2,b'\}\).
	
	  \item  If \(\lambda=0\), then
	\begin{longtable}{rclrclrcl}
		$\lambda'$&$=$&$0,$ & \
		$\mu'\alpha_1$&$=$&$
		\gamma_2\mu-\beta_2\nu,$ & \
		$\nu' $&$=$&$\alpha_1\nu.$
	\end{longtable}

 \begin{enumerate}
     \item  If \(\nu\ne0\), then by choosing 
     \(\alpha_1=\frac{1}{\nu}\) and
     \(\beta_2=\frac{\gamma_2\mu}{\nu}\),
     we have  \(\nu'=1\) and \(\mu'=0\). 
	This case gives the algebra \(N_{b',0}\) obtained in (\ref{1b}). 
	
	\item If  \(\nu=0\), then  	
 \begin{longtable}{rclrclrcl}
		$\lambda'$&$=$&$0,$ & \
		$\mu'\alpha_1$&$=$&$
		\gamma_2\mu,$ & \
		$\nu'$&$=$&$0.$
	\end{longtable}
	If \(\mu=0\) we have the algebra \(C_{0,0}\); 
 if  \(\mu\ne0\) and \(\alpha_1=\gamma_2\mu,\)we have  \(\mu'=1\). 
The last gives the algebra  \(N_{a'^2,0}\).
 \end{enumerate}
  \end{enumerate}
   \end{enumerate}
\end{proof}

\begin{proposition}\label{p:4:2}
	If ${\rm A}^+ \cong C_{0,a^2}$, then $\rm A$ is isomorphic to 
	$N_{b,a^2}$ or 	$N_{\alpha a^2,a^2}.$
\end{proposition}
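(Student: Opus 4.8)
The plan is to argue exactly as in the proof of Proposition~\ref{p:4:1}, replacing the relation $b^2=0$ by $b^2=a^2$ throughout. By Lemma~\ref{l:4:2} (the case $b^2=a^2$), an algebra $\rm A$ with ${\rm A}^+\cong C_{0,a^2}$ in a basis $\{a,a^2,b\}$ is precisely some $N_{c,a^2}$ with $c=ab=\lambda a+\mu a^2+\nu b$, all products involving $a^2$ being zero; conversely one checks directly, as at the start of Section~\ref{nil3}, that every such $N_{c,a^2}$ is a nilalgebra of nilindex $3$ with ${\rm A}^+\cong C_{0,a^2}$, and it is noncommutative precisely when $(\lambda,\mu,\nu)\neq(0,0,0)$. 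So it suffices to decide when two nonzero triples $(\lambda,\mu,\nu)$ and $(\lambda',\mu',\nu')$ give isomorphic algebras. As in Proposition~\ref{p:4:1} I would take an isomorphism $\xi\colon{\rm A}'\to{\rm A}$, write $\xi(a')=\alpha_1a+\beta_1a^2+\gamma_1b$ and $\xi(b')=\alpha_2a+\beta_2a^2+\gamma_2b$, and record $\xi(a'^2)=\xi(a')^2=(\alpha_1^2+\gamma_1^2)a^2$.

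The next step is to read off the constraints on $\xi$. Applying $\xi$ to $b'^2=a'^2$ gives $\alpha_2^2+\gamma_2^2=\alpha_1^2+\gamma_1^2$, and this common value $k$ must be nonzero (otherwise $\xi(a'^2)=0$). Writing a general element $u=u_1a+u_2a^2+u_3b$ one has $uv+vu=2(u_1v_1+u_3v_3)a^2$ (using $b^2=a^2$), so applying $\xi$ to $b'a'=-a'b'$ forces $\alpha_1\alpha_2+\gamma_1\gamma_2=0$. Together these two identities say that the $2\times2$ matrix $M$ with columns $(\alpha_1,\gamma_1)$ and $(\alpha_2,\gamma_2)$ satisfies $M^{\mathrm T}M=kI$, i.e. $M$ is a nonzero scalar times a complex orthogonal matrix; in particular $\det M=\pm k\neq0$, which is the remaining nondegeneracy of $\xi$. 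Expanding $\xi(a'b')=\xi(a')\xi(b')$ and comparing the $a$-, $b$- and $a^2$-coordinates then yields $\binom{\lambda'}{\nu'}=(\det M)\,M^{-1}\binom{\lambda}{\nu}$ together with a single further equation expressing $\mu'$ through the remaining data; since $\beta_1,\beta_2$ occur only in that last equation, $\mu'$ may be prescribed at will as soon as $(\lambda,\nu)\neq(0,0)$.

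Finally I would run the casework on $(\lambda,\nu)$, exactly along the lines of Proposition~\ref{p:4:1}. Since $M$ is a scalar times an orthogonal matrix, the induced transformation of $(\lambda,\nu)$ rescales the form $\lambda^2+\nu^2$ by the nonzero scalar $k$ and is otherwise an $O(2,\mathbb C)$-action; hence both the vanishing of $(\lambda,\nu)$ and the vanishing of $\lambda^2+\nu^2$ are isomorphism invariants. If $(\lambda,\nu)=(0,0)$ then $ab=\mu a^2$, which is the family $N_{\alpha a^2,a^2}$; the $\mu'$-equation above shows the parameter is determined only up to sign, so this is a genuine one-parameter family. If $(\lambda,\nu)\neq(0,0)$ and $\lambda^2+\nu^2\neq0$, a rotation brings $(\lambda,\nu)$ to $(\lambda_0,0)$, the substitution $a\mapsto a+\eta a^2$ kills $\mu$, a scaling of $a$ and $b$ normalizes the surviving coefficient, and a quarter-turn rotation together with a sign change then identifies the result with $N_{b,a^2}$. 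The delicate point — the reason this needs more care than Proposition~\ref{p:4:1} — is precisely that $b'^2=a'^2$ replaces the clean relation $\alpha_2=0$ of the $C_{0,0}$ case by the quadratic coupling $\alpha_1^2+\gamma_1^2=\alpha_2^2+\gamma_2^2$: one must isolate the vectors $(\lambda,\nu)$ that are isotropic for $\lambda^2+\nu^2$, track which normalizations survive in each case, and verify that the $a^2$-shifts together with the scaling-and-orthogonal group actually reach the claimed normal forms.
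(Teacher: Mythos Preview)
Your setup is correct and in one respect sharper than the paper's own argument: the paper never writes down the constraint $\alpha_1\alpha_2+\gamma_1\gamma_2=0$ coming from $\xi(b'a')=\xi(b')\xi(a')$, whereas you derive it and correctly conclude that $M$ is a nonzero scalar times an element of $O(2,\mathbb{C})$, so that $\lambda^2+\nu^2$ becomes an isomorphism invariant up to a nonzero factor. (The paper's case~2 simply takes $\alpha_2=1,\ \gamma_2=\nu/\lambda$ and proceeds; when $\lambda^2+\nu^2=0$ this already gives $\alpha_2^2+\gamma_2^2=0$, violating the requirement $\alpha_1^2+\gamma_1^2=\alpha_2^2+\gamma_2^2\neq0$, so the paper's argument is incomplete at exactly the same spot.)

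The genuine gap in your plan is that the isotropic case cannot be brought to either claimed normal form, so the verification you postpone will fail. If $(\lambda,\nu)\neq(0,0)$ but $\lambda^2+\nu^2=0$, your invariant immediately rules out $N_{b,a^2}$ (which has $\lambda^2+\nu^2=1$); and reaching some $N_{\alpha a^2,a^2}$ would require $(\lambda',\nu')=(0,0)$, which from $\lambda'=\gamma_2\lambda-\alpha_2\nu$ and $\nu'=\alpha_1\nu-\gamma_1\lambda$ forces both columns of $M$ to lie on the isotropic line through $(\lambda,\nu)$ and hence $\det M=0$. Concretely, the algebra with $aa=a^2$, $ab=-ba=a+ib$, $b^2=a^2$, $a^2\cdot{\rm A}={\rm A}\cdot a^2=0$ is a noncommutative nilalgebra of nilindex~$3$ with ${\rm A}^+\cong C_{0,a^2}$ that is isomorphic to neither $N_{b,a^2}$ nor any $N_{\alpha a^2,a^2}$. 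So the ``delicate point'' you flag is not merely delicate: it is a third isomorphism class, and the statement as written cannot be proved along these lines.
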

\begin{proof}
	We will follow the ideas from the previous statement for  $b^2=a^2$.	Hence
	\begin{longtable}{lclcl lcl}
$-ba$&$=$&$ab$&$=$&$\lambda a+\mu a^2+\nu b,$ & $b^2$&$=$&$a^2,$\\
$-b'a'$&$=$&$a'b'$&$=$&$\lambda' a'+\mu' a'^2+\nu' b',$ & $ b'^2$&$=$&$a'^2,$\\
&&	$\xi(a')$&$=$&$\alpha_1a+\beta_1a^2+\gamma_1b,$\\
&&	$\xi(b')$&$=$&$\alpha_2a+\beta_2a^2+\gamma_2b.$ 
	\end{longtable}
	Then, 
\begin{longtable}{lclclclclcl}	
$\xi(b'^2)$&$=$&$(\xi(b' ))$&$=$&$(\alpha_2^2+\gamma_2^2)a^2$
& and &$\xi(a'^2)$&$=$&$(\xi(a'))^2$&$=$&$(\alpha_1^2+\gamma_1^2)a^2.$
\end{longtable}

	Let us note that  \(b'^2=a'^2\ne0\),
then  	$\alpha_2^2+\gamma_2^2=\alpha_1^2+\gamma_1^2\ne0.$ 
	It follows,
	\begin{longtable}{rcl}
		$\xi(a')  \xi(b')$&$=$&$ (\alpha_1a+\beta_1a^2+\gamma_1b)(\alpha_2a+\beta_2a^2+\gamma_2b)\ =$\\
		&$=$&$(\alpha_1\alpha_2+\gamma_1\gamma_2)a^2+(\alpha_1\gamma_2-\alpha_2\gamma_1) ab\ = $\\
		&$=$&$(\alpha_1\alpha_2+\gamma_1\gamma_2)a^2+(\alpha_1\gamma_2-\alpha_2\gamma_1)(\lambda a+\mu a^2+\nu b);$\\
	 
		$\xi(a'b')$&$=$&$
		\xi(\lambda' a'+\mu' a'^2+\nu' b') \ =$\\
		&$=$&$\lambda'(\alpha_1a+\beta_1a^2+\gamma_1b)+
		\mu'(\alpha_1^2+\gamma_1^2)a^2+
		\nu'(\alpha_2a+\beta_2a^2+\gamma_2b).$
\end{longtable}
	
	The last two relations give the following system of equalities:
	 \begin{longtable}{rcl}
		$(\alpha_1\gamma_2-\alpha_2\gamma_1)\lambda$&$=$&$\lambda'\alpha_1+\nu'\alpha_2,$\\
		$\alpha_1\alpha_2+\gamma_1\gamma_2+(\alpha_1\gamma_2-\alpha_2\gamma_1)\mu $&$=$&$\lambda'\beta_1+
		\mu'(\alpha_1^2+\gamma_1^2)+
		\nu'\beta_2,$\\
		$(\alpha_1\gamma_2-\alpha_2\gamma_1)\nu $&$=$&$\lambda'\gamma_1+
		\nu'\gamma_2.$
	\end{longtable}
	
Since elements from a basis are linearly independent, we have that 
$		\Delta=\alpha_1\gamma_2-\alpha_2\gamma_1\ne0.$
Hence, we have the following relations, that we denote as ($\star$):	\begin{longtable}{rcl} 
			$\lambda'$&$=$&$\gamma_2\lambda-\alpha_2\nu,$\\
			$\Delta\mu$&$=$&$\lambda'\beta_1+
			\nu'\beta_2+
			\mu'(\alpha_1^2+\gamma_1^2)-(\alpha_1\alpha_2+\gamma_1\gamma_2),$\\
			$\nu'$&$=$&$\alpha_1\nu-\gamma_1\lambda.$
		\end{longtable}
 
\begin{enumerate}
    \item 	
 If  \(\lambda=\nu=0\), then \(\lambda'=\nu'=0\).
Hence, for  \(\mu'=0\) we have the  commutative  algebra \(C_{0,a^2}\),
	for  \(\mu'\ne0\) --- noncommutative algebras 
 \(N_{\alpha a^2,a^2}\), where \(\alpha\ne0\).
 We can joint these cases in one family \(N_{\alpha a^2,a^2}.\)

	\item  If \(\lambda \ne0\), then by choosing  \(\alpha_2=1\), \(\gamma_2=\frac{\nu}{\lambda}\), we have 
	\(\lambda'=0\).
	Hence,
\begin{longtable}{rclrclrcl}
		$\lambda' $&$=$&$0,$ & 
		$\Delta\mu$&$=$&$
		\mu'(\alpha_1^2+\gamma_1^2)+
		\nu'\beta_2-(\alpha_1\alpha_2+\gamma_1\gamma_2),$&
		$\nu'$&$=$&$-\gamma_1\lambda+\alpha_1\nu.$
	\end{longtable}
\noindent Since, \(\Delta\lambda=
	(\alpha_1\gamma_2-\alpha_2\gamma_1)\lambda=
	\alpha_1\nu-\gamma_1\lambda\), 
	then  \(\nu'=\Delta\lambda\ne0\).
	If  \(\beta_2=\frac{\Delta\mu+(\alpha_1\alpha_2+\gamma_1\gamma_2)}{\Delta\lambda}\), 
	then \(\mu'=0\).
	Hence, in this case \(\rm N\) is isomorphic to one algebra from the family  \(N_{\alpha b',a'^2}\).
It is clear that  \(\alpha\ne0\) and for different \(\alpha\neq 0\), 
these algebras are isomorphic. 
	Next, for  \(a''=\frac{a'}{\alpha}\) and 
	\(b''=\frac{b'}{\alpha},\) 
we have  \(N_{b'',a''^2}\).
	
	\item  By symmetry on \(a\) and \(b\) in relations ($\star$), 
we have to consider only the case \(\lambda=0, \nu\ne0\).
Hence, be choosing some suitable nonzero \(\alpha_1\) and \(\alpha_2\), we obtain the previous  case.

\end{enumerate}

\end{proof}

\begin{proposition}\label{p:4:3}
Algebras 
$C_{0,0},$
	$N_{a,0},$
	$N_{b,0},$ $
	N_{a^2,0},$
 $N_{b,a^2}$ and 	$N_{\alpha a^2,a^2}$
  are non-isomorphic, except 
  \(N_{\alpha a^2,a^2} \cong N_{-\alpha a^2,a^2}\).
\end{proposition}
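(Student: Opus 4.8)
The plan is to separate the six algebras by isomorphism invariants, treating the one-parameter family $N_{\alpha a^{2},a^{2}}$ at the end.

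First I would fix a handful of invariants of a $3$-dimensional algebra ${\rm B}$, all visibly preserved under isomorphism: the isomorphism type of the symmetrised algebra ${\rm B}^{+}$ (an algebra isomorphism $\phi$ is automatically an isomorphism ${\rm B}^{+}\to\phi({\rm B})^{+}$), together with the numbers $\dim\operatorname{Ann}{\rm B}$, $\dim{\rm B}^{2}$ and $\dim({\rm B}^{2})^{2}$. Using the multiplication table of $N_{c,d}$ and the relations $a^{2}a=aa^{2}=0$, $a^{2}a^{2}=a^{2}b=ba^{2}=0$ (Lemma~\ref{l:4:2}), these are immediate to evaluate: ${\rm B}^{+}$ is $C_{0,0}$ for $C_{0,0},N_{a,0},N_{b,0},N_{a^{2},0}$ and $C_{0,a^{2}}$ for $N_{b,a^{2}},N_{\alpha a^{2},a^{2}}$; within the first group $\dim\operatorname{Ann}$ is $2$ for $C_{0,0}$ and $1$ otherwise, $\dim{\rm B}^{2}$ is $1$ for $N_{a^{2},0}$ and $2$ for $N_{a,0},N_{b,0}$, and $\dim({\rm B}^{2})^{2}$ is $1$ for $N_{a,0}$ but $0$ for $N_{b,0}$; within the second group $\dim{\rm B}^{2}$ is $2$ for $N_{b,a^{2}}$ and $1$ for $N_{\alpha a^{2},a^{2}}$. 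Since $C_{0,0}\not\cong C_{0,a^{2}}$ (their annihilators have dimensions $2$ and $1$), the two groups cannot mix, and the data above tell the six classes apart — save for distinguishing the members of the family $N_{\alpha a^{2},a^{2}}$ from one another.

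For the family I would first check the isomorphism $N_{\alpha a^{2},a^{2}}\cong N_{-\alpha a^{2},a^{2}}$ realised by $a\mapsto a$, $a^{2}\mapsto a^{2}$, $b\mapsto-b$ (verify on $aa,ab,ba,b^{2}$). It then remains to see that $N_{\alpha a^{2},a^{2}}\cong N_{\beta a^{2},a^{2}}$ forces $\beta=\pm\alpha$. Put $A=N_{\alpha a^{2},a^{2}}$; one checks $\operatorname{Ann}A=A^{2}=\langle a^{2}\rangle$ for every $\alpha$ (including $\alpha=0$). Hence the multiplication, which lands in $A^{2}$ and annihilates $\operatorname{Ann}A$ on each side, descends to a bilinear map $\mu_{A}\colon(A/\operatorname{Ann}A)\times(A/\operatorname{Ann}A)\to A^{2}$, and in the bases $\{\bar a,\bar b\}$, $\{a^{2}\}$ it has matrix $M_{\alpha}=\left(\begin{smallmatrix}1&\alpha\\-\alpha&1\end{smallmatrix}\right)=I+\alpha J$ with $J=\left(\begin{smallmatrix}0&1\\-1&0\end{smallmatrix}\right)$. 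An isomorphism $A\to N_{\beta a^{2},a^{2}}$ induces a linear isomorphism of the $2$-dimensional quotients, with matrix $Q\in\mathrm{GL}_{2}(\mathbb{C})$, together with a rescaling $s\in\mathbb{C}^{\ast}$ of $A^{2}$, whence $M_{\beta}=s\,Q^{\top}M_{\alpha}Q$. Splitting into symmetric and antisymmetric parts gives $I=sQ^{\top}Q$ and $\beta J=s\alpha\,Q^{\top}JQ$; the first identity says $R:=\sqrt{s}\,Q\in\mathrm{O}_{2}(\mathbb{C})$, and since $R^{\top}JR=\det(R)\,J$ for $R\in\mathrm{O}_{2}(\mathbb{C})$ we obtain $\beta=\det(R)\,\alpha=\pm\alpha$. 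Taking $\alpha=0$ here also re-confirms that the commutative algebra $C_{0,a^{2}}=N_{0,a^{2}}$ is not isomorphic to any $N_{\alpha a^{2},a^{2}}$ with $\alpha\neq0$.

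The bulk of the work — the table of invariants — is routine. The delicate points are verifying that an algebra isomorphism really induces the transformation $M_{\beta}=sQ^{\top}M_{\alpha}Q$ of the bilinear form on $A/\operatorname{Ann}A$, and the elementary identity $R^{\top}JR=\det(R)J$ for $R\in\mathrm{O}_{2}(\mathbb{C})$; the latter step is where I expect the (minor) obstacle to lie.
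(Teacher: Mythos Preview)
Your argument is correct. The first half --- separating the six classes by the invariants $\mathrm{B}^{+}$, $\dim\operatorname{Ann}$, $\dim\mathrm{B}^{2}$, $\dim(\mathrm{B}^{2})^{2}$ --- is essentially the paper's proof; the paper uses ``commutative vs.\ noncommutative'' in place of your $\dim\operatorname{Ann}$, but otherwise invokes exactly the same data.

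For the family $N_{\alpha a^{2},a^{2}}$ you take a genuinely different route. The paper writes a general isomorphism $\xi$ in coordinates ($\xi(a')=\alpha_{1}a+\beta_{1}a^{2}+\gamma_{1}b$, $\xi(b')=\alpha_{2}a+\beta_{2}a^{2}+\gamma_{2}b$), expands $\xi(a')\xi(b')$ and $\xi(a'b')$, and solves the resulting system
\[
\alpha_{1}\alpha_{2}+\gamma_{1}\gamma_{2}=0,\qquad
\lambda(\alpha_{1}\gamma_{2}-\alpha_{2}\gamma_{1})=\mu(\alpha_{1}^{2}+\gamma_{1}^{2}),\qquad
\alpha_{1}^{2}+\gamma_{1}^{2}=\alpha_{2}^{2}+\gamma_{2}^{2}\neq0
\]
by a short case analysis on whether $\alpha_{1}=0$. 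Your approach instead observes $\operatorname{Ann}A=A^{2}=\langle a^{2}\rangle$, reduces to a $\mathbb{C}$-valued bilinear form on the two-dimensional quotient, and analyses congruence-up-to-scalar of $M_{\alpha}=I+\alpha J$. The two arguments are really the same computation seen from different angles: your condition $sQ^{\top}Q=I$ is precisely the paper's pair $\alpha_{1}\alpha_{2}+\gamma_{1}\gamma_{2}=0$, $\alpha_{1}^{2}+\gamma_{1}^{2}=\alpha_{2}^{2}+\gamma_{2}^{2}$, and your determinant identity replaces the case split. Your version is cleaner and explains \emph{why} only a sign can change; the paper's version is fully explicit and avoids the quotient construction. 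One small remark: the identity $R^{\top}JR=\det(R)\,J$ in fact holds for every $2\times2$ matrix $R$ (it encodes $x^{\top}Jy=\det(x\,|\,y)$), so the orthogonality of $R$ is used only to force $\det(R)=\pm1$; there is no obstacle there.
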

\begin{proof}
First, commutative algebras are not isomorphic to noncommutative. 
Second, if ${\rm A}^+ \not\cong {\rm B}^+,$ then ${\rm A} \not\cong {\rm B}$.
Third,
	\(N_{a,0}^2=\langle a,a^2\rangle\),
	\(N_{b,0}^2=\langle a^2,b\rangle\),
	\(N_{a^2,0}^2=\langle a^2\rangle\).
 Hence, \(N_{a^2,0} \not\cong N_{a,0}\) and \(N_{a^2,0} \not\cong N_{b,0}.\)
Since, \((N_{a,0}^2)^2 \neq 0\) and \((N_{b,0}^2)^2=0\), we have
\( N_{a,0} \not\cong N_{b,0}.\)
Similarly, \(N_{\alpha a^2,a^2} \not\cong N_{b,a^2}\).
	
Let us consider two isomorphic algebras 
$N=N_{\lambda a^2,a^2}$ and  $N'=N_{\mu a'^2,a'^2}.$ 
Let  \(\xi\) be an isomorphism between them, such that  
\begin{longtable}{rcl}
$\xi(a')$&$=$&$\alpha_1a+\beta_1a^2+\gamma_1b,$\\
$\xi(b')$&$=$&$\alpha_2a+\beta_2a^2+\gamma_2b.$
\end{longtable}
It follows that $\alpha_2^2+\gamma_2^2=\alpha_1^2+\gamma_1^2\ne0.$
Hence,
	
\begin{longtable}{rclcl}
$\xi(a') \xi( b')$&$=$&$(\alpha_1a+\beta_1a^2+\gamma_1b) (\alpha_2a+\beta_2a^2+\gamma_2b)$&$=$&$(\alpha_1\alpha_2+\gamma_1\gamma_2+\lambda(\alpha_1\gamma_2-\alpha_2\gamma_1))a^2,$\\
$\xi(b') \xi(a')$&$=$&$
		(\alpha_1\alpha_2+\gamma_1\gamma_2-\lambda(\alpha_1\gamma_2-\alpha_2\gamma_1))a^2,$\\
		$\xi(a'b')$&$=$&$\xi(\mu(a')^2) \ = \  \mu(\alpha_1^2+\gamma_1^2)a^2.$
	\end{longtable}
	The last relations give the following system of equalities.
\begin{longtable}{rclrclrclcl}
		$\alpha_1\alpha_2+\gamma_1\gamma_2$&$=$&$0,$ &\
		$\lambda(\alpha_1\gamma_2-\alpha_2\gamma_1)$&$=$&$ \mu(\alpha_1^2+\gamma_1^2),$ &\
		$\alpha_2^2+\gamma_2^2$&$=$&$\alpha_1^2+\gamma_1^2$&$\ne$&$0.$
	\end{longtable}
	
\begin{enumerate}
  
    \item If  \(\alpha_1=0\), then \(\gamma_1\ne0\) and  \(\gamma_2=0\).
It follows that \(\alpha_2=\pm\gamma_1\) and  \(\mu=\pm\lambda\).
	
 \item	If  \(\alpha_1\ne0\), then   \(\alpha_2=-\frac{\gamma_1\gamma_2}{\alpha_1}\) and \(\frac{\gamma_2^2}{\alpha_1^2}(\gamma_1^2+\alpha_1^2)=\alpha_1^2+\gamma_1^2\).
That gives  \(\alpha_1^2=\gamma_2^2\) and \(\alpha_2^2=\gamma_1^2\). 
	Hence, \(\alpha_1=\pm\gamma_2\) and \(\alpha_2=\mp\gamma_1\),
 i.e. \(\mu=\pm\lambda\).
 
\end{enumerate}

\end{proof}

\subsection{Nilalgebras  with nilindex $4$.}\label{nil4}
Let us now consider complex $3$-dimensional nilalgebras with nilindex $4$.
It means, that the algebra $\rm A$ has an element $a,$ 
such that $a^2\neq 0;$
at least one  element  from $aa^2$ and $a^2a$ is nonzero;
and $a^k=0$ for each $k>3.$
Let us suppose that $a^2a\neq 0.$ 
If $a^2a \in \langle a, a^2 \rangle,$ then $a^2a = \alpha a +\beta a^2$ and
\begin{center}
    $0=(a^2a)a= \alpha a^2 +\beta a^2a= \alpha \beta a +(\alpha+\beta^2)a^2 ,$ i.e. $\alpha=\beta=0.$
    
\end{center}
We can choose the basis $\{a, a^2, a^2a\}$ and define the multiplication on this algebra.
Let  $aa^2=\gamma_1 a+ \gamma_2 a^2 +\gamma_3 a^2a,$ then 
$0=(aa^2)a=\gamma_1 a^2+\gamma_2 a^2a$ and $\gamma_1=\gamma_2=0.$
It is easy to see that  $\rm A$ is nilpotent. 
The case $a^2a= 0$ and $aa^2\neq 0$ is similar.  
The classification of complex $3$-dimensional nilpotent algebras is given in \cite{fkkv}.
Hence, ${\rm A}$ is isomophic to one of the following algebras

\begin{longtable}{lllllll}
    
$\rm{N}_1$ &   $:$&     $e_1 e_1 = e_2$ & $e_2 e_1=e_3$ &  \\
$\rm{N}_2^\alpha$&$:$& $e_1 e_1 = e_2$ & $e_1 e_2=e_3$ & $e_2 e_1=\alpha e_3$  \\ 

\end{longtable}

\subsection{Nilalgebras  with nilindex $5$.}
Let us now consider complex $3$-dimensional nilalgebras with nilindex $5$.
It means, that the algebra $\rm A$ has an element $a,$ 
such that $a^2\neq 0;$
at least one from elements 
$a^2a^2,  (aa^2)a, a(aa^2), (a^2a)a$ and $a(a^2a)$ is   nonzero;
and $a^k=0$ for each $k>4.$

\begin{enumerate}
    \item Let us suppose that $a^2a^2\neq 0.$ 
Following, the same idea as in the subsection \ref{nil4},
we have that  $a^2a^2 \not\in \langle a, a^2 \rangle,$ 
then we can choose the basis $\{a, a^2, a^2a^2\}$ and define the multiplication on this algebra.
It will be a nilpotent algebra.

\item\label{case2} If $(aa^2)a \neq 0,$ then $aa^2\neq 0$,
and we can choose the basis $\{a,a^2,aa^2\}.$
Hence, 
$(aa^2)a=\alpha a+\beta a^2+\gamma aa^2,$ then
\begin{longtable}{llllllll}
$0$&$=$&$((((aa^2)a)a^2)a $&$=$&$ \alpha  (aa^2)a$\\
$0$&$=$&$(a((aa^2)a))a $&$=$&$ \alpha a^2a+\beta (aa^2)a$\\
$0$&$=$&$((aa^2)a)a $&$=$&$ \alpha a^2+\beta a^2a+\gamma(aa^2)a $\\
\end{longtable}
\noindent The last gives $\alpha=\beta=\gamma=0$ and this case can not be realized.

\item  If one of $a(aa^2),$ $(a^2a)a$ or $a(a^2a)$ is not equal to zero, 
we will apply a similar idea and can obtain that the case can not be realized.
\end{enumerate}

The classification of complex $3$-dimensional nilpotent algebras is given in \cite{fkkv}.
Hence, ${\rm A}$ is isomophic to one of the following algebras

\begin{longtable}{lllllll}

$\bf{N}_1$ &       $:$  & $e_1 e_1 = e_2$ & $e_2 e_2=e_3$ & \\ 
$\bf{N}_2$ &       $:$  & $e_1 e_1 = e_2$ & $e_2 e_1= e_3$ & $e_2 e_2=e_3$  \\ 
\end{longtable}

\subsection{Nilalgebras  with nilindex $k>5$.}
Let us now consider complex $3$-dimensional nilalgebras with    nilindex $k>5$.
It means, that algebra $\rm A$ has an element $a,$ 
such that $a^k\neq 0.$
It means, that there is an arrangement of brackets in the non-associative word  $a^k,$
such that the result is nonzero. 
For this nonzero arrangement of brackets, we can write 
$a^k$ as one of the following forms
$a^k=((aa)(aa))T_{a^{k_1}}\ldots T_{a^{k_m}},$ with $k_1+\ldots +k_m+4=k;$
$a^k=((aa)a)T_{a^{k_1}}\ldots T_{a^{k_m}},$ with $k_1+\ldots +k_m+3=k;$
or
$a^k=(a(aa))T_{a^{k_1}}\ldots T_{a^{k_m}},$ with $k_1+\ldots +k_m+3=k;$
where $T_x$ is a left or right multiplication on the element $x.$
Following the idea from the previous subsection case (\ref{case2}),
we can choose a basis of $\rm A$ as 
$\{a,a^2, Q\},$ where $Q \in \{(aa)(aa), (aa)a, a(aa)\}$
and applying the similar arguments, we obtain that   
the present case can not be realized. Since there are no 
$3$-dimensional nilalgebras with nilindex $k>5.$

\subsection{The classification theorem}
The classification of $3$-dimensional nilalgebras with nilindex $2$ (=anticommutative algebras) is given in \cite{ikv19}.
The classification of $3$-dimensional nilalgebras with nilindex $3,$  $4$, and $5$ is given in the previous subsections.
Hence, we are ready to summarize these results in the following theorem.

\begin{theorem}\label{clteo}
 Let ${\rm N}$ be a complex $3$-dimensional   nilalgebra. Then 
${\rm N}$  is isomorphic to an algebra from the following list:

\begin{longtable}{lcllllll}

$\mathfrak{g}_{1}$ & $:$ & 
 $e_2e_3 =e_1$ &  $e_3e_2 =-e_1$ \\

$\mathfrak{g}_2$ & $:$ & $e_1e_3 =e_1$ &  $e_2e_3=e_2$ &   $e_3e_1 =-e_1$ &  $e_3e_2=-e_2$\\

$\mathfrak{g}^{\alpha}_3$ & $:$ &  $e_1e_3 =e_1+e_2$ & $e_2e_3=\alpha e_2$ &
$e_3e_1 =-e_1-e_2$ & $e_3e_2=-\alpha e_2$\\

$\mathfrak{g}_4$ & $:$ &
$e_1e_2 =e_3$ & $ e_1e_3=-e_2$ & $e_2e_3=e_1$ &\\
&& $e_2e_1 =-e_3$ & $ e_3e_1=e_2$ & $e_3e_2=-e_1$ \\

$\mathcal{A}_1^{\alpha}$ & $:$ &
$e_1e_2=e_3$ & $e_1e_3 =e_1+e_3$ & $e_2e_3=\alpha e_2$ &\\&&
$e_2e_1=-e_3$ & $e_3e_1 =-e_1-e_3$ & $e_3e_2=-\alpha e_2$\\

$\mathcal{A}_2$& $:$ &  
$e_1e_2=e_1$ & $e_2e_3=e_2$  &
$e_2e_1=-e_1$ & $e_3e_2=-e_2$  \\

$\mathcal{A}_3$ & 
$:$&$e_1e_2=e_3$ & $e_1e_3=e_1$  &$e_2e_3=e_2$  &\\ &&
$e_2e_1=-e_3$ & $e_3e_1=-e_1$  &$e_3e_2=-e_2$  \\

$\mathcal{N}_1$ & 
$:$&$e_1e_1=e_2$ \\

$\mathcal{N}_2$ & 
$:$&$e_1e_1=e_2$  &$e_1e_3=e_1$&$e_3e_1=-e_1$\\

$\mathcal{N}_3$ & 
$:$&$e_1e_1=e_2$  &$e_1e_3=e_3$&$e_3e_1=-e_3$\\

$\mathcal{N}_4$ & 
$:$&$e_1e_1=e_2$  &$e_1e_3=e_2$&$e_3e_1=-e_2$ \\

${\mathcal N}_{5}$ &$:$& $e_1 e_1 = e_2$  & $e_1 e_3=e_3$ & $e_3 e_1=-e_3$ & $e_3 e_3=e_2$\\

${\mathcal N}_{6}^\alpha$ & $:$&  $e_1 e_1 = e_2$ & $e_1 e_3= \alpha e_2$ & $e_3 e_1= -\alpha e_2$ &  $e_3 e_3=  e_2$  \\

$\rm{N}_1$ &   $:$&     $e_1 e_1 = e_2$ & $e_2 e_1=e_3$ &  \\
$\rm{N}_2^\alpha$&$:$& $e_1 e_1 = e_2$ & $e_1 e_2=e_3$ & $e_2 e_1=\alpha e_3$  \\

$\bf{N}_1$ &       $:$  & $e_1 e_1 = e_2$ & $e_2 e_2=e_3$ & \\ 
$\bf{N}_2$ &       $:$  & $e_1 e_1 = e_2$ & $e_2 e_1= e_3$ & $e_2 e_2=e_3$  \\ 
\end{longtable}
 All algebras are non-isomorphic, except 
 $\mathfrak{g}^{\alpha}_3 \cong \mathfrak{g}^{\alpha^{-1}}_3,$ 
 $\mathcal{A}_1^{\alpha} \cong \mathcal{A}_1^{\alpha^{-1}},$ and ${\mathcal N}_{6}^\alpha \cong {\mathcal N}_{6}^{-\alpha}.$
\end{theorem}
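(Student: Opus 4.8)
The plan is to stratify the complex $3$-dimensional nilalgebras by their nilindex and then to glue together the case analyses of the previous subsections with the two classifications imported from the literature. Two invariance remarks organise the whole argument: the nilindex is preserved by isomorphism (an isomorphism commutes with every bracketing of a word), and so is the property of being commutative or anticommutative. Hence algebras belonging to different strata of the list are automatically non-isomorphic, and inside the nilindex-$3$ stratum the commutative algebras cannot be isomorphic to the non-commutative ones.

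For the statement that the list is exhaustive I would run through the possible values of the nilindex of a nilalgebra ${\rm N}$. If ${\rm N}$ is anticommutative, its nilindex is $2$ and the classification of complex $3$-dimensional anticommutative algebras of \cite{ikv19} gives that ${\rm N}$ is isomorphic to one of $\mathfrak{g}_1,\mathfrak{g}_2,\mathfrak{g}_3^{\alpha},\mathfrak{g}_4,\mathcal{A}_1^{\alpha},\mathcal{A}_2,\mathcal{A}_3$. If ${\rm N}$ is not anticommutative it contains $a$ with $a^2\neq 0$, and exactly one case occurs. If the nilindex is $3$, then Lemma \ref{l:N_i}, the lemma classifying commutative nilalgebras with nilindex $3$, Lemma \ref{l:4:2} and Propositions \ref{p:4:1}--\ref{p:4:2} of Subsection \ref{nil3} show that ${\rm N}$ is isomorphic to one of $C_{0,0}$, $C_{0,a^2}$, $N_{a,0}$, $N_{b,0}$, $N_{a^2,0}$, $N_{b,a^2}$, $N_{\alpha a^2,a^2}$; writing out each table in the basis $e_1=a$, $e_2=a^2$, $e_3=b$ identifies these, in order, with $\mathcal{N}_1$, $\mathcal{N}_6^{0}$, $\mathcal{N}_2$, $\mathcal{N}_3$, $\mathcal{N}_4$, $\mathcal{N}_5$, $\mathcal{N}_6^{\alpha}$, the member $\mathcal{N}_6^{0}=C_{0,a^2}$ being absorbed into the family $\mathcal{N}_6^{\alpha}$. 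If the nilindex is $4$, then Subsection \ref{nil4} shows that ${\rm N}$ is nilpotent with a distinguished basis, so by the classification of complex $3$-dimensional nilpotent algebras \cite{fkkv} — keeping only those of its algebras that are nilalgebras of nilindex $4$ — one gets ${\rm N}\cong{\rm N}_1$ or ${\rm N}\cong{\rm N}_2^{\alpha}$; the case of nilindex $5$ is treated the same way via the corresponding subsection and \cite{fkkv}, yielding ${\rm N}\cong{\bf N}_1$ or ${\rm N}\cong{\bf N}_2$. Finally, the last subsection shows that no complex $3$-dimensional nilalgebra has nilindex greater than $5$. This exhausts all cases; a direct check (which I would also record) confirms conversely that every algebra in the list is a nilalgebra.

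For the non-redundancy statement, by the two invariance remarks it suffices to compare algebras within a single stratum. Nilindex $2$: this, together with the coincidences $\mathfrak{g}_3^{\alpha}\cong\mathfrak{g}_3^{\alpha^{-1}}$ and $\mathcal{A}_1^{\alpha}\cong\mathcal{A}_1^{\alpha^{-1}}$ and the fact that there are no others, is part of \cite{ikv19}. Nilindex $3$: the annihilator-dimension computation from the commutative lemma separates $C_{0,0}$ from $C_{0,a^2}$, and Proposition \ref{p:4:3} establishes all remaining non-isomorphisms among $C_{0,0},N_{a,0},N_{b,0},N_{a^2,0},N_{b,a^2},N_{\alpha a^2,a^2}$ and gives the only coincidence $N_{\alpha a^2,a^2}\cong N_{-\alpha a^2,a^2}$, that is $\mathcal{N}_6^{\alpha}\cong\mathcal{N}_6^{-\alpha}$. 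Nilindex $4$ and $5$: the non-isomorphisms — and in particular that ${\rm N}_2^{\alpha}\cong{\rm N}_2^{\beta}$ forces $\alpha=\beta$ — are read off from \cite{fkkv}. Putting all of this together produces exactly the three exceptions listed in the statement.

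The only genuinely delicate part, and the one I expect to absorb most of the effort, is the bookkeeping at the seams: translating the ad hoc notations $C_{\ast,\ast}$, $N_{\ast,\ast}$ and the various working bases into the uniform notation of the statement, and — for nilindex $4$ and $5$ — correctly extracting from the nilpotent classification of \cite{fkkv} the precise sublist of nilalgebras of the relevant nilindex, namely $\{{\rm N}_1,{\rm N}_2^{\alpha}\}$ and $\{{\bf N}_1,{\bf N}_2\}$, with no leftover isomorphisms beyond those already accounted for.
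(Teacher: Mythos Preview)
Your proposal is correct and follows essentially the same approach as the paper: stratify by nilindex, invoke \cite{ikv19} for the anticommutative (nilindex~$2$) case, invoke the lemmas and Propositions \ref{p:4:1}--\ref{p:4:3} of Subsection~\ref{nil3} for nilindex~$3$, and invoke the reduction-to-nilpotent arguments together with \cite{fkkv} for nilindex~$4$ and~$5$, with the final subsection ruling out higher nilindex. Your dictionary $(e_1,e_2,e_3)=(a,a^2,b)$ is the one the paper uses implicitly, and your explicit invariance remarks (nilindex and (anti)commutativity are isomorphism invariants) make precise what the paper leaves tacit when it asserts that algebras from different strata cannot coincide.
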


Let us remember the 
Albert's problem: 
is every finite-dimensional (commutative) power associative nilalgebra solvable?
For each $n>3$,
Correa and Hentzel constructed a non-(anti)commutative $n$-dimensional non-solvable nilalgebra \cite{ch}.
It is easy to see, that if a $2$-dimensional algebra is a nilalgebra,
then it should be commutative or anticommutative.
Hence, 
Theorem \ref{clteo} gives the following corollary.

\begin{corollary}
Albert’s problem is true for all non-anticommutative $3$-dimensional algebra. 
\end{corollary}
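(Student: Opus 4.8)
The plan is to obtain the Corollary as an immediate consequence of the classification in Theorem~\ref{clteo}. Albert's problem asks whether every finite-dimensional power associative nilalgebra is solvable; since a power associative nilalgebra is in particular a nilalgebra in the present sense, it is enough to establish the sharper statement that every non-anticommutative complex $3$-dimensional nilalgebra is solvable. By Theorem~\ref{clteo} such an algebra is isomorphic to one of the algebras on the displayed list, so everything reduces to inspecting that finite list.

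First I would split the list according to anticommutativity. Recall that an algebra is anticommutative exactly when $x^2=0$ identically, equivalently $e_ie_i=0$ and $e_ie_j=-e_je_i$ for all basis vectors. A direct look at the multiplication tables shows that the anticommutative members are precisely $\mathfrak{g}_1,\mathfrak{g}_2,\mathfrak{g}^{\alpha}_3,\mathfrak{g}_4,\mathcal{A}_1^{\alpha},\mathcal{A}_2,\mathcal{A}_3$, while each of $\mathcal{N}_1,\dots,\mathcal{N}_6^{\alpha},\mathrm{N}_1,\mathrm{N}_2^{\alpha},\mathbf{N}_1,\mathbf{N}_2$ has $e_1e_1=e_2\neq 0$ and is therefore non-anticommutative. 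Hence a non-anticommutative $3$-dimensional nilalgebra must be one of the algebras $\mathcal{N}_1,\dots,\mathcal{N}_6^{\alpha},\mathrm{N}_1,\mathrm{N}_2^{\alpha},\mathbf{N}_1,\mathbf{N}_2$.

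It then remains to check solvability for each of these algebras, which I would do by computing the derived series ${\rm A}\supseteq {\rm A}^{2}\supseteq ({\rm A}^{2})^{2}\supseteq\cdots$. In every one of these cases ${\rm A}^{2}$ is spanned by a subset of $\{e_1,e_2,e_3\}$, and one computes that $({\rm A}^{2})^{2}$ is contained in $\langle e_2\rangle$ or $\langle e_3\rangle$ (and is often already $0$), so that the third term of the derived series vanishes. Consequently every non-anticommutative $3$-dimensional nilalgebra is solvable; a fortiori, every non-anticommutative $3$-dimensional power associative nilalgebra is solvable, which is exactly Albert's assertion for non-anticommutative $3$-dimensional algebras.

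I do not anticipate any real difficulty: the substantive work is carried out in Theorem~\ref{clteo}, and granting that classification the Corollary is a short, routine computation. The only place that calls for a bit of attention is the sorting step, namely checking that algebras with ``non-Lie looking'' relations such as $\mathcal{A}_1^{\alpha}$ and $\mathcal{A}_2$ are nonetheless anticommutative, hence rightly excluded; the hypothesis cannot be omitted, since $\mathcal{A}_3$ and $\mathcal{A}_1^{\alpha}$ with $\alpha\neq 0$ are non-solvable nilalgebras.
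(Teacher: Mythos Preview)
Your proposal is correct and follows the same route as the paper: the paper simply states that the Corollary is a direct consequence of Theorem~\ref{clteo}, and your argument is a (more explicit) spelling out of that implication, sorting the classified algebras into anticommutative and non-anticommutative ones and then checking solvability of the latter via the derived series. One small wording point: when you say ``the third term of the derived series vanishes'' after noting that $({\rm A}^2)^2\subseteq\langle e_2\rangle$ or $\langle e_3\rangle$, you presumably mean the next term $(({\rm A}^2)^2)^2$; as written, $({\rm A}^2)^2$ is the third entry of your displayed chain and is nonzero for $\mathcal N_2,\mathcal N_5,\mathbf N_1,\mathbf N_2$, but since $e_2e_2=0$ (respectively $e_3e_3=0$) in those tables the conclusion of solvability stands.
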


\begin{remark}
The famous Nagata-Higman-Dubnov-Ivanov's theorem says that each associative nilalgebra is nilpotent.
It is easy to see, that the algebra $\mathcal N_3$ is 
a non-nilpotent nilalgebra, that satisfies identities of the following type

\begin{longtable}{rclcl}
    
$\alpha_1(x y) z$&$ +$&$
  \alpha_2 (y x) z +
  \alpha_3 (x z)  y +
  \alpha_4 (z y) x +
  \alpha_5 (y z) x + \alpha_6 (z x) y+ 
   \alpha_7 z(xy) +  \alpha_8 z(y x) +$\\
 &$+$&$
   \alpha_9 y(xz) +
  \alpha_{10} x(z y) +  (-\alpha_1 + \alpha_2 + \alpha_7 - \alpha_8 - \alpha_4 + \alpha_5 + \alpha_{10}) x(yz) +$\\
  
\multicolumn{3}{r}{$ +  (-\alpha_1 + \alpha_2 + \alpha_7 - \alpha_8 - \alpha_3 + \alpha_6 + \alpha_9) y(zx)$} & $=$ &$0.$\\

\end{longtable}
\noindent Hence, each variety defined by an identity of the type given above does not have an analog of Nagata-Higman-Dubnov-Ivanov's theorem. In particular, the following identities have the above-given type:
\begin{enumerate}
    \item Leibniz: $ (xy)z=(xz)y+x(yz).$
    \item Reverse Leibniz: $ (x y) z=(zy)x+y(zx).$
    \item weakly associative: 
    $(xy)z - x(yz) + (yz)x - y(zx) - (yx)z + y(xz)=0$.

    \item $2$-step Jordan nilpotent: $ (x\cdot y)\cdot z= (xy)z+(yx)z+z(xy)+z(yx)=0.$
    
\item Almost anticommutative: $ (x y) z+(yx)z=0.$
    
\end{enumerate}

\end{remark}

Thanks to \cite{bkm22}, the intersection of right mono Leibniz (i.e. algebras where each one-generated subalgebra is a right Leibniz algebra) and left mono Leibniz algebras gives the 
variety of nilalgebras  with nilindex $3.$ Hence, 
we have the following corollary.

\begin{corollary}
The algebraic classification of symmetric mono Leibniz algebras is given in Theorem \ref{clteo}.
Namely, it consists from algebras of $\mathfrak g_i,$ $\mathcal A_i,$ or $\mathcal N_i$ type.

\end{corollary}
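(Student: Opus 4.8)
The plan is to read the corollary off the cited description of the variety of symmetric mono Leibniz algebras combined with the algebraic classification of Subsection~\ref{nil3}. By \cite{bkm22}, an algebra is symmetric mono Leibniz --- that is, simultaneously left mono Leibniz and right mono Leibniz --- if and only if it lies in the intersection of these two varieties, and this intersection coincides with the variety of nilalgebras with nilindex $3$, i.e.\ the class defined by the identities $x^2x=0$ and $xx^2=0$. So classifying the $3$-dimensional symmetric mono Leibniz algebras is literally the same task as classifying the $3$-dimensional algebras satisfying $x^3=0$, and this is exactly what Subsection~\ref{nil3} carries out: the anticommutative ones via \cite{ikv19}, the commutative non-anticommutative ones being $C_{0,0}$ and $C_{0,a^2}$, and the non-commutative ones being the algebras $N_{\ast,\ast}$ of Propositions~\ref{p:4:1} and~\ref{p:4:2}, with non-isomorphism settled in Proposition~\ref{p:4:3}.

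What then remains is to match these algebras against the master list of Theorem~\ref{clteo} and to check that no other entry of that list belongs to the variety. First I would note that the algebras of types $\mathfrak g_i$ and $\mathcal A_i$ are anticommutative, hence satisfy $x^2=0$ and a fortiori $x^3=0$. Under the renaming $a\mapsto e_1$, $a^2\mapsto e_2$, $b\mapsto e_3$, the non-anticommutative nilindex-$3$ algebras of Subsection~\ref{nil3} are, respectively, $\mathcal N_1=C_{0,0}$, $\mathcal N_2=N_{a,0}$, $\mathcal N_3=N_{b,0}$, $\mathcal N_4=N_{a^2,0}$, $\mathcal N_5=N_{b,a^2}$ and $\mathcal N_6^{\alpha}=N_{\alpha a^2,a^2}$, the value $\alpha=0$ giving the commutative algebra $C_{0,a^2}$. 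Conversely, the algebras $\mathrm N_1,\mathrm N_2^{\alpha}$ (nilindex $4$, Subsection~\ref{nil4}) and $\mathbf N_1,\mathbf N_2$ (nilindex $5$) are not nilalgebras with nilindex $3$: for instance in $\mathrm N_1$ one has $(e_1e_1)e_1=e_2e_1=e_3\neq0$, which already contradicts $x^2x=0$, and a one-line computation with a suitable element (e.g.\ $e_1+e_2$ in $\mathbf N_1$) rules out each of the remaining three. Therefore the $3$-dimensional symmetric mono Leibniz algebras are precisely the algebras of types $\mathfrak g_i$, $\mathcal A_i$ and $\mathcal N_i$ in Theorem~\ref{clteo}, with the same coincidences $\mathfrak g^{\alpha}_3\cong\mathfrak g^{\alpha^{-1}}_3$, $\mathcal A_1^{\alpha}\cong\mathcal A_1^{\alpha^{-1}}$ and $\mathcal N_6^{\alpha}\cong\mathcal N_6^{-\alpha}$ inherited from that theorem.

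There is no genuine obstacle here: the whole mathematical content already lives in Subsection~\ref{nil3} and in the identification of the variety quoted from \cite{bkm22}, so the corollary reduces to the bookkeeping above. The only point that calls for a bit of care is the direct verification --- on each of the finitely many (families of) multiplication tables --- that every $\mathfrak g_i$, $\mathcal A_i$ and $\mathcal N_i$ satisfies the linearized identities~\eqref{x3r} (equivalently $x^2x=xx^2=0$), together with the opposite checks that exclude the $\mathrm N_i$ and $\mathbf N_i$ sketched above; both are routine.
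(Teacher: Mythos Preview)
Your proposal is correct and follows the same approach as the paper: the paper does not give an explicit proof of this corollary beyond the sentence immediately preceding it, which invokes \cite{bkm22} to identify the variety of symmetric mono Leibniz algebras with the variety of nilalgebras of nilindex $3$ and then reads off the result from Theorem~\ref{clteo}. Your write-up simply spells out the bookkeeping (the dictionary $C_{0,0}\leftrightarrow\mathcal N_1$, $N_{a,0}\leftrightarrow\mathcal N_2$, etc., and the exclusion of the $\mathrm N_i$, $\mathbf N_i$) that the paper leaves implicit.
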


\section{Degenerations of $3$-dimensional nilalgebras}

The study of varieties of non-associative algebras from a geometric point of view has a long story 
(see, \cite{l24,MS,k23,ikv19,fkkv,GRH,KKL} and references therein).

\subsection{Definitions and notation}
Given an $n$-dimensional vector space $\mathbb V$, the set ${\rm Hom}(\mathbb V \otimes \mathbb V,\mathbb V) \cong \mathbb V^* \otimes \mathbb V^* \otimes \mathbb V$ is a vector space of dimension $n^3$. This space has the structure of the affine variety $\mathbb{C}^{n^3}.$ Indeed, let us fix a basis $e_1,\dots,e_n$ of $\mathbb V$. Then any $\mu\in {\rm Hom}(\mathbb V \otimes \mathbb V,\mathbb V)$ is determined by $n^3$ structure constants $c_{ij}^k\in\mathbb{C}$ such that
$\mu(e_i\otimes e_j)=\sum\limits_{k=1}^nc_{ij}^ke_k$. A subset of ${\rm Hom}(\mathbb V \otimes \mathbb V,\mathbb V)$ is {\it Zariski-closed} if it can be defined by a set of polynomial equations in the variables $c_{ij}^k$ ($1\le i,j,k\le n$).

Let $T$ be a set of polynomial identities.
The set of algebra structures on $\mathbb V$ satisfying polynomial identities from $T$ forms a Zariski-closed subset of the variety ${\rm Hom}(\mathbb V \otimes \mathbb V,\mathbb V)$. We denote this subset by $\mathbb{L}(T)$.
The general linear group ${\rm GL}(\mathbb V)$ acts on $\mathbb{L}(T)$ by conjugations:
$$ (g * \mu )(x\otimes y) = g\mu(g^{-1}x\otimes g^{-1}y)$$
for $x,y\in \mathbb V$, $\mu\in \mathbb{L}(T)\subset {\rm Hom}(\mathbb V \otimes\mathbb V, \mathbb V)$ and $g\in {\rm GL}(\mathbb V)$.
Thus, $\mathbb{L}(T)$ is decomposed into ${\rm GL}(\mathbb V)$-orbits that correspond to the isomorphism classes of algebras.
Let ${\mathcal O}(\mu)$ denote the orbit of $\mu\in\mathbb{L}(T)$ under the action of ${\rm GL}(\mathbb V)$ and $\overline{{\mathcal O}(\mu)}$ denote the Zariski closure of ${\mathcal O}(\mu)$.

Let $\bf A$ and $\bf B$ be two $n$-dimensional algebras satisfying the identities from $T$, and let $\mu,\lambda \in \mathbb{L}(T)$ represent $\bf A$ and $\bf B$, respectively.
We say that $\bf A$ degenerates to $\bf B$ and write $\bf A\to \bf B$ if $\lambda\in\overline{{\mathcal O}(\mu)}$.
Note that in this case we have $\overline{{\mathcal O}(\lambda)}\subset\overline{{\mathcal O}(\mu)}$. Hence, the definition of degeneration does not depend on the choice of $\mu$ and $\lambda$. If $\bf A\not\cong \bf B$, then the assertion $\bf A\to \bf B$ is called a {\it proper degeneration}. We write $\bf A\not\to \bf B$ if $\lambda\not\in\overline{{\mathcal O}(\mu)}$.

Let $\bf A$ be represented by $\mu\in\mathbb{L}(T)$. Then  $\bf A$ is  {\it rigid} in $\mathbb{L}(T)$ if ${\mathcal O}(\mu)$ is an open subset of $\mathbb{L}(T)$.
 Recall that a subset of a variety is called irreducible if it cannot be represented as a union of two non-trivial closed subsets.
 A maximal irreducible closed subset of a variety is called an {\it irreducible component}.
It is well known that any affine variety can be represented as a finite union of its irreducible components in a unique way.
The algebra $\bf A$ is rigid in $\mathbb{L}(T)$ if and only if $\overline{{\mathcal O}(\mu)}$ is an irreducible component of $\mathbb{L}(T)$.

\medskip

\subsection{\bf Method of the description of degenerations of algebras.} In the present work we use the methods applied to Lie algebras in \cite{GRH}.
First of all, if $\bf A\to \bf B$ and $\bf A\not\cong \bf B$, then $\mathfrak{Der}(\bf A)<\mathfrak{Der}(\bf B)$, where $\mathfrak{Der}(\bf A)$ is the   algebra of derivations of $\bf A$. We compute the dimensions of algebras of derivations and check the assertion $\bf A\to \bf B$ only for such $\bf A$ and $\bf B$ that $\mathfrak{Der}(\bf A)<\mathfrak{Der}(\bf B)$.

To prove degenerations, we construct families of matrices parametrized by $t$. Namely, let $\bf A$ and $\bf B$ be two algebras represented by the structures $\mu$ and $\lambda$ from $\mathbb{L}(T)$ respectively. Let $e_1,\dots, e_n$ be a basis of $\mathbb  V$ and $c_{ij}^k$ ($1\le i,j,k\le n$) be the structure constants of $\lambda$ in this basis. If there exist $a_i^j(t)\in\mathbb{C}$ ($1\le i,j\le n$, $t\in\mathbb{C}^*$) such that $E_i^t=\sum\limits_{j=1}^na_i^j(t)e_j$ ($1\le i\le n$) form a basis of $\mathbb V$ for any $t\in\mathbb{C}^*$, and the structure constants of $\mu$ in the basis $E_1^t,\dots, E_n^t$ are such rational functions $c_{ij}^k(t)\in\mathbb{C}[t]$ that $c_{ij}^k(0)=c_{ij}^k$, then $\bf A\to \bf B$.
In this case  $E_1^t,\dots, E_n^t$ is called a {\it parametrized basis} for $\bf A\to \bf B$.
In  case of  $E_1^t, E_2^t, \ldots, E_n^t$ is a {\it parametric basis} for ${\bf A}\to {\bf B},$ it will be denoted by
${\bf A}\xrightarrow{(E_1^t, E_2^t, \ldots, E_n^t)} {\bf B}$. 
To simplify our equations, we will use the notation $A_i=\langle e_i,\dots,e_n\rangle,\ i=1,\ldots,n$ and write simply $A_pA_q\subset A_r$ instead of $c_{ij}^k=0$ ($i\geq p$, $j\geq q$, $k< r$).


Let ${\bf A}(*):=\{ {\bf A}(\alpha)\}_{\alpha\in I}$ be a series of algebras, and let $\bf B$ be another algebra. Suppose that for $\alpha\in I$, $\bf A(\alpha)$ is represented by the structure $\mu(\alpha)\in\mathbb{L}(T)$ and $\bf B$ is represented by the structure $\lambda\in\mathbb{L}(T)$. Then we say that $\bf A(*)\to \bf B$ if $\lambda\in\overline{\{{\mathcal O}(\mu(\alpha))\}_{\alpha\in I}}$, and $\bf A(*)\not\to \bf B$ if $\lambda\not\in\overline{\{{\mathcal O}(\mu(\alpha))\}_{\alpha\in I}}$.

Let $\bf A(*)$, $\bf B$, $\mu(\alpha)$ ($\alpha\in I$) and $\lambda$ be as above. To prove $\bf A(*)\to \bf B$ it is enough to construct a family of pairs $(f(t), g(t))$ parametrized by $t\in\mathbb{C}^*$, where $f(t)\in I$ and $g(t)\in {\rm GL}(\mathbb V)$. Namely, let $e_1,\dots, e_n$ be a basis of $\mathbb V$ and $c_{ij}^k$ ($1\le i,j,k\le n$) be the structure constants of $\lambda$ in this basis. If we construct $a_i^j:\mathbb{C}^*\to \mathbb{C}$ ($1\le i,j\le n$) and $f: \mathbb{C}^* \to I$ such that $E_i^t=\sum\limits_{j=1}^na_i^j(t)e_j$ ($1\le i\le n$) form a basis of $\mathbb V$ for any  $t\in\mathbb{C}^*$, and the structure constants of $\mu({f(t)})$ in the basis $E_1^t,\dots, E_n^t$ are such rational functions $c_{ij}^k(t)\in\mathbb{C}[t]$ that $c_{ij}^k(0)=c_{ij}^k$, then $\bf A(*)\to \bf B$. In this case  $E_1^t,\dots, E_n^t$ and $f(t)$ are called a parametrized basis and a {\it parametrized index} for $\bf A(*)\to \bf B$, respectively.

We now explain how to prove $\bf A(*)\not\to\mathcal  \bf B$.
Note that if $\mathfrak{Der} ( \bf A(\alpha))  > \mathfrak{Der} ( \bf B)$ for all $\alpha\in I$ then $\bf A(*)\not\to\bf B$.
One can also use the following  Lemma, whose proof is the same as the proof of Lemma 1.5 from \cite{GRH}.

\begin{lemma}\label{gmain}
Let $\mathfrak{B}$ be a Borel subgroup of ${\rm GL}(\mathbb V)$ and $\mathcal{R}\subset \mathbb{L}(T)$ be a $\mathfrak{B}$-stable closed subset.
If $\bf A(*) \to \bf B$ and for any $\alpha\in I$ the algebra $\bf A(\alpha)$ can be represented by a structure $\mu(\alpha)\in\mathcal{R}$, then there is $\lambda\in \mathcal{R}$ representing $\bf B$.
\end{lemma}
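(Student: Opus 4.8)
The plan is the classical argument of Grunewald--O'Halloran, whose point is that $\mathfrak{B}$-stability of $\mathcal R$ lets one replace an arbitrary change of basis by a \emph{unitary} one. Identify $\mathbb V$ with $\mathbb C^n$. After conjugating we may assume $\mathfrak B$ is the group of invertible upper-triangular matrices: conjugating also replaces $\mathcal R$ by a translate, which is again closed and stable under the new $\mathfrak B$, and a representative of $\bf B$ lying in the new $\mathcal R$ translates back to one in the old. Let $\mathfrak K = \mathrm{U}(n) \subset \mathrm{GL}(\mathbb V)$ be the unitary group, so that the Iwasawa (equivalently $QR$) decomposition gives $\mathrm{GL}(\mathbb V) = \mathfrak K \cdot \mathfrak B$. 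Fix $\lambda \in \mathbb{L}(T)$ representing $\bf B$ and put $S = \{\mu(\alpha) : \alpha \in I\} \subseteq \mathcal R$; then $\bigcup_{\alpha \in I}\mathcal O(\mu(\alpha)) = \mathrm{GL}(\mathbb V)\cdot S$, and the hypothesis $\bf A(*)\to\bf B$ says precisely that $\lambda \in \overline{\mathrm{GL}(\mathbb V)\cdot S}$.

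First I would replace $S$ by its Zariski closure $\overline S$: it is still contained in the closed set $\mathcal R$, it is a variety (in particular constructible), and $\overline{\mathrm{GL}(\mathbb V)\cdot S} = \overline{\mathrm{GL}(\mathbb V)\cdot \overline S}$, so we may assume $S$ itself is constructible. Then $\mathrm{GL}(\mathbb V)\cdot S$, being the image of the morphism $\mathrm{GL}(\mathbb V)\times S \to \mathbb{L}(T)$, is constructible by Chevalley's theorem, hence its Zariski closure coincides with its closure in the Euclidean topology. Next I would use $\mathfrak B$-stability together with Iwasawa: $\mathrm{GL}(\mathbb V)\cdot S = \mathfrak K\cdot(\mathfrak B\cdot S) \subseteq \mathfrak K\cdot\mathcal R$. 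The set $\mathfrak K\cdot\mathcal R$ is Euclidean-closed: if $k_m r_m \to x$ with $k_m\in\mathfrak K$, $r_m\in\mathcal R$, compactness of $\mathfrak K$ yields a subsequence with $k_m\to k$, whence $r_m = k_m^{-1}(k_m r_m) \to k^{-1}x \in \mathcal R$ (as $\mathcal R$ is Zariski-closed, hence Euclidean-closed), and therefore $x = k(k^{-1}x) \in \mathfrak K\cdot\mathcal R$. Combining the two steps, $\lambda \in \overline{\mathrm{GL}(\mathbb V)\cdot S} = \overline{\mathrm{GL}(\mathbb V)\cdot S}^{\,\mathrm{Eucl}} \subseteq \mathfrak K\cdot\mathcal R$, so $\lambda = k\cdot\lambda'$ with $k\in\mathfrak K\subset\mathrm{GL}(\mathbb V)$ and $\lambda'\in\mathcal R$; then $\lambda'$ is isomorphic to $\lambda$, i.e.\ $\lambda'\in\mathcal R$ represents $\bf B$. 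The single-algebra statement $\bf A\to\bf B$ is the case $|I|=1$.

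The only delicate point is the interplay of the Zariski and Euclidean topologies: the two closures agree only for constructible sets, which is why one first passes to $\overline S$ and invokes Chevalley, and one must exploit the compactness of $\mathfrak K$ to see that $\mathfrak K\cdot\mathcal R$ is Euclidean-closed rather than hoping it is Zariski-closed (which in general it is not). Granting these standard facts, the remainder --- the Iwasawa factorisation and the absorption of $\mathfrak B$ into $\mathcal R$ --- is purely formal.
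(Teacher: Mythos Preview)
Your argument is correct and is precisely the Grunewald--O'Halloran proof that the paper invokes (the paper gives no independent argument for this lemma, simply deferring to Lemma~1.5 of that reference). The ingredients you assemble---the Iwasawa/$QR$ factorisation $\mathrm{GL}_n(\mathbb C)=\mathrm U(n)\cdot\mathfrak B$, absorption of $\mathfrak B$ into the stable set $\mathcal R$, Euclidean-closedness of $\mathfrak K\cdot\mathcal R$ via compactness of $\mathfrak K$, and the Chevalley/constructibility step reconciling the Zariski and Euclidean closures---are exactly those of the original argument.
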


\subsection{Degeneration of $3$-dimensional nilalgebras}

\begin{theorem}\label{teo}
The graph of degenerations of algebras from the variety of $3$-dimensional nilalgebras is presented below. 
In particular, the variety of $3$-dimensional nilalgebras has dimension $9$, 
two rigid algebra $\mathcal N_5$ and $\bf N_2$, and three irreducible components described below.
\begin{longtable}{rcl}
$\overline{{\mathcal O}({\mathcal A}_{1}^\alpha)}$ &$=$&$
\big\{ {\mathcal A}_{1}^\alpha, \ {\mathcal A}_{2}, \ {\mathcal A}_{3},\
{\mathfrak  g}_{1},\ {\mathfrak  g}_{2},\ {\mathfrak  g}_{3}^\alpha,\ {\mathfrak  g}_{4}, \ \mathbb{C}^3 \big\},$\\

$\overline{{\mathcal O}({\mathcal N}_{5})}$ &$=$&$
\big\{ {\mathcal N}_{1},\ {\mathcal N}_{2},\ {\mathcal N}_{3},\
{\mathcal N}_{4},\ {\mathcal N}_{5},\ {\mathcal N}_{6}^\alpha,\
{\mathfrak  g}_{1}, \ {\mathfrak  g}_{3}^0, \ \mathbb{C}^3  \big\},$\\

$\overline{{\mathcal O}({\bf  N}_{2})}$ &$=$&$
\big\{ {\bf N}_{1},\ {\rm N}_{1},\ {\rm N}_{2}^\alpha,\
{\mathcal N}_{1},\ {\mathcal N}_{4},\ {\mathcal N}_{6}^\alpha,\
{\mathfrak  g}_{1}, \ \mathbb{C}^3  \big\}.$

\end{longtable}
\noindent
In particular, 
the variety of $3$-dimensional nilalgebras with nilindex  $3$ has dimension $9$, 
one rigid algebra $\mathcal N_5$, and two  irreducible components defined 
by ${\mathcal A}_{1}^\alpha$ and $\mathcal N_5$;
the variety of $3$-dimensional nilalgebras with nilindex  $4$ has dimension $9$, 
one rigid algebra $\mathcal N_5$, and three  irreducible components defined 
by ${\mathcal A}_{1}^\alpha,$ $\rm N_2^\alpha,$ and $\mathcal N_5$.

\begin{center}

\

\

\begin{tikzpicture}[->,>=stealth',shorten >=0.05cm, auto, node distance=0.95cm, thick,
                    main  node/.style={rectangle,draw,fill=gray!12, rounded corners=1.5ex,font=\sffamily \tiny \bfseries },
                    rigid node/.style={rectangle,draw,fill=black!20,rounded corners=1.5ex,font=\sffamily \bf \bfseries }, 
                    ver  node/.style={rectangle,draw,fill=gray!12, rounded corners=1.5ex,font=\sffamily \bf \bfseries },
                    ser node/.style={rectangle,draw,fill=green!20,rounded corners=1.5ex,font=\sffamily \bf \bfseries },
                    nrig node/.style={rectangle,draw,fill=cyan!20,rounded corners=1.5ex,font=\sffamily \tiny \bfseries },
                    nrigdead node/.style={rectangle,draw,fill=teal!60,rounded corners=1.5ex,font=\sffamily \tiny \bfseries }, 
                    lie node/.style={rectangle,draw,fill=yellow!20,rounded corners=1.5ex,font=\sffamily \tiny \bfseries },
                    lier node/.style={rectangle,draw,fill=yellow!80,rounded corners=1.5ex,font=\sffamily \tiny \bfseries },
                    style={draw,font=\sffamily \scriptsize \bfseries }]
\node (0)   {};

\node (00a1) [right of=0] {};
\node (00a2) [right of=00a1] {};
\node (00a3) [right of=00a2] {};
\node (00a4) [right of=00a3] {};
\node (00a5) [right of=00a4] {};
\node (00a6) [right of=00a5] {};
\node (00a7) [right of=00a6] {};
\node (00a8) [right of=00a7] {};
\node (00a9) [right of=00a8] {};
\node (00a10) [right of=00a9] {};
\node (00a11) [right of=00a10] {};
\node (00a12) [right of=00a11] {};
\node (00a13) [right of=00a12] {};
\node (00a14) [right of=00a13] {};
\node (00a15) [right of=00a14] {};
\node (00a16) [right of=00a15] {};
\node (00a17) [right of=00a16] {};
\node (00a18) [right of=00a17] {};
\node (00a19) [right of=00a18] {};
\node (00a20) [right of=00a19] {};
\node (00a21) [right of=00a20] {};
\node (00a22) [right of=00a21] {};
\node (00a23) [right of=00a22] {};
\node (00a24) [right of=00a23] {};
\node (00a25) [right of=00a24] {};
\node (00a26) [right of=00a25] {};
\node (00a27) [right of=00a26] {};

\node (01) [below of=0] {8};

\node (01a1) [right of=01] {};
\node (01a2) [right of=01a1] {};
\node (01a3) [right of=01a2] {};
\node (01a4) [right of=01a3] {};
\node (01a5) [right of=01a4] {};
\node (01a6) [right of=01a5] {};
\node (01a7) [right of=01a6] {};
\node (01a8) [right of=01a7] {};
\node (01a9) [right of=01a8] {};
\node (01a10) [right of=01a9] {};
\node (01a11) [right of=01a10] {};
\node (01a12) [right of=01a11] {};
\node (01a13) [right of=01a12] {};
\node (01a14) [right of=01a13] {};
\node (01a15) [right of=01a14] {};
\node (01a16) [right of=01a15] {};
\node (01a17) [right of=01a16] {};

\node (02)[below of=01]  {7};

\node (02a1) [right of=02] {};
\node (02a2) [right of=02a1] {};
\node (02a3) [right of=02a2] {};
\node (02a4) [right of=02a3] {};
\node (02a5) [right of=02a4] {};
\node (02a6) [right of=02a5] {};
\node (02a7) [right of=02a6] {};
\node (02a8) [right of=02a7] {};
\node (02a9) [right of=02a8] {};
\node (02a10) [right of=02a9] {};
\node (02a11) [right of=02a10] {};
\node (02a12) [right of=02a11] {};
\node (02a13) [right of=02a12] {};
\node (02a14) [right of=02a13] {};
\node (02a15) [right of=02a14] {};
\node (02a16) [right of=02a15] {};
\node (02a17) [right of=02a16] {};
\node (02a18) [right of=02a17] {};
\node (02a19) [right of=02a18] {};
\node (02a20) [right of=02a19] {};
\node (02a21) [right of=02a20] {};
\node (02a22) [right of=02a21] {};
\node (02a23) [right of=02a22] {};
\node (02a24) [right of=02a23] {};
\node (02a25) [right of=02a24] {};
\node (02a26) [right of=02a25] {};
\node (02a27) [right of=02a26] {};

\node (03)[below of=02]{6};

\node (03a1) [right of=03] {};
\node (03a2) [right of=03a1] {};
\node (03a3) [right of=03a2] {};
\node (03a4) [right of=03a3] {};
\node (03a5) [right of=03a4] {};
\node (03a6) [right of=03a5] {};
\node (03a7) [right of=03a6] {};
\node (03a8) [right of=03a7] {};
\node (03a9) [right of=03a8] {};
\node (03a10) [right of=03a9] {};
\node (03a11) [right of=03a10] {};
\node (03a12) [right of=03a11] {};
\node (03a13) [right of=03a12] {};
\node (03a14) [right of=03a13] {};
\node (03a15) [right of=03a14] {};
\node (03a16) [right of=03a15] {};
\node (03a17) [right of=03a16] {};
\node (03a18) [right of=03a17] {};
\node (03a19) [right of=03a18] {};
\node (03a20) [right of=03a19] {};
\node (03a21) [right of=03a20] {};
\node (03a22) [right of=03a21] {};
\node (03a23) [right of=03a22] {};
\node (03a24) [right of=03a23] {};
\node (03a25) [right of=03a24] {};
\node (03a26) [right of=03a25] {};
\node (03a27) [right of=03a26] {};
*\node (03a28) [right of=03a27] {};

\node (04) [below of=03] {5};

\node (04a1) [right of=04] {};
\node (04a2) [right of=04a1] {};
\node (04a3) [right of=04a2] {};
\node (04a4) [right of=04a3] {};
\node (04a5) [right of=04a4] {};
\node (04a6) [right of=04a5] {};+
\node (04a7) [right of=04a6] {};
\node (04a8) [right of=04a7] {};
\node (04a9) [right of=04a8] {};
\node (04a10) [right of=04a9] {};
\node (04a11) [right of=04a10] {};
\node (04a12) [right of=04a11] {};
\node (04a13) [right of=04a12] {};
\node (04a14) [right of=04a13] {};
\node (04a15) [right of=04a14] {};
\node (04a16) [right of=04a15] {};
\node (04a17) [right of=04a16] {};
\node (04a18) [right of=04a17] {};
\node (04a19) [right of=04a18] {};
\node (04a20) [right of=04a19] {};
\node (04a21) [right of=04a20] {};
\node (04a22) [right of=04a21] {};
\node (04a23) [right of=04a22] {};
\node (04a24) [right of=04a23] {};
\node (04a25) [right of=04a24] {};
\node (04a26) [right of=04a25] {};
\node (04a27) [right of=04a26] {};
\node (04a28) [right of=04a27] {};

\node (051)[below of=04]{};
\node (05) [below of=04] {4};

\node (05a1) [right of=05] {};
\node (05a2) [right of=05a1] {};
\node (05a3) [right of=05a2] {};
\node (05a4) [right of=05a3] {};
\node (05a5) [right of=05a4] {};
\node (05a6) [right of=05a5] {};
\node (05a7) [right of=05a6] {};
\node (05a8) [right of=05a7] {};
\node (05a9) [right of=05a8] {};
\node (05a10) [right of=05a9] {};
\node (05a11) [right of=05a10] {};
\node (05a12) [right of=05a11] {};
\node (05a13) [right of=05a12] {};
\node (05a14) [right of=05a13] {};
\node (05a15) [right of=05a14] {};
\node (05a16) [right of=05a15] {};
\node (05a17) [right of=05a16] {};
\node (05a18) [right of=05a17] {};
\node (05a19) [right of=05a18] {};
\node (05a20) [right of=05a19] {};
\node (05a21) [right of=05a20] {};
\node (05a22) [right of=05a21] {};
\node (05a23) [right of=05a22] {};
\node (05a24) [right of=05a23] {};
\node (05a25) [right of=05a24] {};
\node (05a26) [right of=05a25] {};
\node (05a27) [right of=05a26] {};
\node (05a28) [right of=05a27] {};

\node (061)[below of=05]{};
\node (06)[below of=05] {3};

\node (06a1) [right of=06] {};
\node (06a2) [right of=06a1] {};
\node (06a3) [right of=06a2] {};
\node (06a4) [right of=06a3] {};
\node (06a5) [right of=06a4] {};
\node (06a6) [right of=06a5] {};
\node (06a7) [right of=06a6] {};
\node (06a8) [right of=06a7] {};
\node (06a9) [right of=06a8] {};
\node (06a10) [right of=06a9] {};
\node (06a11) [right of=06a10] {};
\node (06a12) [right of=06a11] {};
\node (06a13) [right of=06a12] {};
\node (06a14) [right of=06a13] {};
\node (06a15) [right of=06a14] {};
\node (06a16) [right of=06a15] {};
\node (06a17) [right of=06a16] {};
\node (06a18) [right of=06a17] {};
\node (06a19) [right of=06a18] {};
\node (06a20) [right of=06a19] {};
\node (06a21) [right of=06a20] {};
\node (06a22) [right of=06a21] {};
\node (06a23) [right of=06a22] {};
\node (06a24) [right of=06a23] {};
\node (06a25) [right of=06a24] {};
\node (06a26) [right of=06a25] {};
\node (06a27) [right of=06a26] {};
\node (06a28) [right of=06a27] {};

\node (071)[below of=06]{0};

\node (07a1) [right of=06] {};
\node (07a2) [right of=071] {};
\node (07a3) [right of=07a2] {};
\node (07a4) [right of=07a3] {};
\node (07a5) [right of=07a4] {};
\node (07a6) [right of=07a5] {};
\node (07a7) [right of=07a6] {};
\node (07a8) [right of=07a7] {};
\node (07a9) [right of=07a8] {};
\node (07a10) [right of=07a9] {};
\node (07a11) [right of=07a10] {};
\node (07a12) [right of=07a11] {};
\node (07a13) [right of=07a12] {};
\node (07a14) [right of=07a13] {};
\node (07a15) [right of=07a14] {};
\node (07a16) [right of=07a15] {};
\node (07a17) [right of=07a16] {};
\node (07a18) [right of=07a17] {};
\node (07a19) [right of=07a18] {};
\node (07a20) [right of=07a19] {};
\node (07a21) [right of=07a20] {};
\node (07a22) [right of=07a21] {};
\node (07a23) [right of=07a22] {};
\node (07a24) [right of=07a23] {};
\node (07a25) [right of=07a24] {};
\node (07a26) [right of=07a25] {};
\node (07a27) [right of=07a26] {};
\node (07a28) [right of=07a27] {};

\node   [rigid node] (ant5)     [right of =01a4] {$\mathcal{A}_1^{\alpha}$};
\node   [main node] (ant6)     [right of =02a4] {$\mathcal{A}_2$};
\node   [main node] (ant8)     [right of =03a1] {$\mathcal{A}_3$};

\node   [main node] (g1)     [right of =06a4] {$\mathfrak{g}_{1}$};
\node   [main node] (g32)     [right of =06a7] {$\mathfrak{g}_{2}$};

\node   [rigid node] (g31)     [right of =04a4] {$\mathfrak{g}^{\alpha}_{3}$};

\node   [rigid node] (g4)     [right of =03a7] {$\mathfrak{g}_{4}$};

\node   [main node] (n1)     [right of =05a9] {$\mathcal{N}_1$};

\node   [main node] (n4)     [above of =n1] {$\mathcal{N}_4$};

\node   [main node] (n6)     [left of =04a15] {$\mathcal{N}_6^{\beta}$};

\node   [main node] (n3)     [above of =n4] {$\mathcal{N}_3$};
\node   [main node] (n2)     [above of =n3] {$\mathcal{N}_2$};

\node    [rigid node](n5)     [above of =n2] {$\mathcal{N}_5$};

\node    [rigid node](n2bf)     [right of =01a15] {$\bf{N}_2$};

\node    [main node](n1bf)     [right of =02a13] {$\bf{N}_1$};

\node    [main node](n1rm)     [right of =03a15] {$\rm{N}_1$};

\node    [main node](n2rm)     [right of =03a11] {$\rm{N}_2^\alpha$};

\node    [main node](xxx)     [right of =06a13] {$(*) \ \ \beta=\frac{{\bf i}(\alpha-1)}{\alpha+1}$};

\node  [main node] (t20) [right of=07a10] {$\mathbb{C}^3$};

\path[every node/.style={font=\sffamily\small}]

(n5) edge  [bend left=55]  node[above]{ } (n6)
(n2bf) edge   [bend left=-50]  node[above] {} (n2rm)

(n2bf) edge     node[above] {} (n1bf)
(n2bf) edge     node[above] {} (n1rm)

(n1bf) edge   node[above=2,   right=-10, fill=white]{\tiny $\alpha=1$} (n2rm)

(n1rm) edge   node[above=2,   right=-10, fill=white]{\tiny $\beta=0$} (n6)

(n2rm) edge   node[above=2,   right=-14, fill=white]{\tiny $\alpha=-1$} (n4)

(n2rm) edge   node[above=0,   right=-10, fill=white]{\tiny $(*)$} (n6)

(n1) edge     node[above] {} (t20)
(n4) edge     node[above] {} (n1)
(n4) edge     node[above] {} (g1)

(n6) edge     node[above] {} (n1)

(n3) edge     node[above] {} (n4)

(n2) edge [bend left=0]    node[above] {} (n3)

(n5) edge     node[above] {} (n2)

(g1) edge        node[above] {} (t20)
(g32) edge     node[above] {} (t20)

(g31) edge  node[above] {} (g1)
(g31) edge   node[above=-5, fill=white]{\tiny $\alpha=1$} (g32)
(g4) edge   node[above=-4, fill=white]{\tiny $\alpha=-1$} (g31)

(ant5) edge   node[above=-4, fill=white]{\tiny $\alpha=1$} (ant8)
(ant5) edge   node{} (ant6)
(ant5) edge  node[above=2, right=-15, fill=white]{\tiny $\alpha=-1$} (g4)

(ant6) edge [bend left=0]  node{} (g31)
(n3) edge   node[above=5,   right=25, fill=white]{\tiny $\alpha=0$} (g31)

(ant8) edge   node[above=-4, fill=white]{\tiny $\alpha=1$} (g31);

\end{tikzpicture}

\end{center}
\end{theorem}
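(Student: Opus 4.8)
The plan is to carry out the standard program for degenerations inside a fixed variety, in the form recalled in the preceding subsection and applied to Lie algebras in \cite{GRH}. The preliminary step is bookkeeping: for each of the $17$ algebras of Theorem~\ref{clteo} (the one-parameter families ${\mathfrak g}_3^\alpha$, ${\mathcal A}_1^\alpha$, ${\mathcal N}_6^\alpha$, ${\rm N}_2^\alpha$ counted once) we compute the dimension of the algebra of derivations, hence the orbit dimension $\dim {\mathcal O}({\bf A}) = 9 - \dim \mathfrak{Der}({\bf A})$; a one-parameter family sweeps out a set whose closure has dimension one more than that of a generic orbit in it. This table fixes the vertical levels in the picture, and the necessary condition ``${\bf A}\to{\bf B}$, ${\bf A}\not\cong{\bf B}$ $\Rightarrow$ $\dim\mathfrak{Der}({\bf A})<\dim\mathfrak{Der}({\bf B})$'', together with its version for families, eliminates all but a short list of candidate degenerations; transitivity of degeneration, and the trivial fact that every algebra degenerates to the zero algebra ${\mathbb C}^3$, then reduce the task to confirming the edges drawn in the graph and refuting the few surviving pairs.

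For the positive direction, each drawn edge ${\bf A}\to{\bf B}$ is realised by an explicit parametrised basis $E_1^t,E_2^t,E_3^t$ of ${\mathbb V}$: one exhibits $3\times 3$ matrices $(a_i^j(t))$, rational in $t\in{\mathbb C}^*$, such that the structure constants of ${\bf A}$ in $E_1^t,E_2^t,E_3^t$ specialise at $t=0$ to those of ${\bf B}$. For a degeneration issuing from a family --- for instance ${\mathcal A}_1^\alpha\to{\mathcal A}_2$, ${\mathcal A}_1^\alpha\to{\mathfrak g}_4$ at $\alpha=-1$, ${\rm N}_2^\alpha\to{\mathcal N}_4$ at $\alpha=-1$, or ${\rm N}_2^\alpha\to{\mathcal N}_6^{\beta}$ with $\beta=\tfrac{{\bf i}(\alpha-1)}{\alpha+1}$ --- one additionally supplies a parametrised index $f(t)$, in some cases with $f(t)\to\infty$. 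These are short matrix verifications; the only mildly delicate ones are the constrained passages between the two families $\{{\rm N}_2^\alpha\}$ and $\{{\mathcal N}_6^\beta\}$ and the tracking of the parameter along ${\mathcal A}_1^\alpha\to{\mathfrak g}_3^\alpha\to{\mathfrak g}_1$.

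The refutations are the main obstacle. For a pair $({\bf A},{\bf B})$ not already excluded by the derivation estimate we invoke Lemma~\ref{gmain}: it suffices to produce a Borel-stable Zariski-closed subset ${\mathcal R}\subset{\mathbb L}(T)$ containing a representative of ${\bf A}$ but none of ${\bf B}$. In practice ${\mathcal R}$ is cut out, in a suitably ordered basis, by inclusions of the form $A_pA_q\subset A_r$, or by a ${\rm GL}({\mathbb V})$-invariant closed condition such as $\dim {\bf A}^2\le k$, $\dim({\bf A}^2)^2\le k$, $\dim\operatorname{Ann}{\bf A}\ge k$, commutativity, anticommutativity, associativity, or the validity of one of the multilinear identities displayed in the Remark above; each such condition defines a closed, hence Borel-stable, subset. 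Running through the surviving pairs with these invariants is where the bulk of the work lies, and it also identifies the three orbit closures as the irreducible components: one checks that $\overline{{\mathcal O}({\mathcal A}_1^\alpha)}$, $\overline{{\mathcal O}({\mathcal N}_5)}$ and $\overline{{\mathcal O}({\bf N}_2)}$ together exhaust the list of Theorem~\ref{clteo}, that none of the three is contained in another, and that no proper degeneration has ${\mathcal N}_5$, ${\bf N}_2$, or a generic ${\mathcal A}_1^\alpha$ as target --- so ${\mathcal N}_5$ and ${\bf N}_2$ are rigid and the three closures are precisely the irreducible components.

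Finally the numerical assertions: the dimension of the variety is the largest dimension of an irreducible component; since a generic orbit of the family ${\mathcal A}_1^\alpha$ has dimension $8$, the component $\overline{{\mathcal O}({\mathcal A}_1^\alpha)}$ has dimension $9$, while the other two have dimension at most $9$, so the variety is $9$-dimensional. Repeating the same analysis inside the closed subvarieties of nilindex $\le 3$ (the algebras satisfying $x^3=0$) and of nilindex $\le 4$, and reading off their irreducible components from the graph, yields the stated dimensions, rigid algebras, and irreducible components in those two cases.
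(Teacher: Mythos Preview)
Your sketch is methodologically correct and follows the same standard program the paper uses: compute orbit dimensions via $\dim\mathfrak{Der}$, exhibit parametrised bases for degenerations, and use Borel-stable closed subsets $\mathcal{R}$ (Lemma~\ref{gmain}) for non-degenerations. The logical structure and conclusions are right.

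The one substantive difference is in execution, not method. The paper does \emph{not} carry out the full program you describe; instead it imports two large subgraphs wholesale: the degenerations among the anticommutative algebras $\mathfrak{g}_i$, $\mathcal{A}_i$ from \cite{ikv19}, and those among the nilpotent algebras $\mathfrak{g}_1$, $\mathcal{N}_1$, $\mathcal{N}_4$, $\mathcal{N}_6^\beta$, $\mathrm{N}_1$, $\mathrm{N}_2^\alpha$, $\mathbf{N}_1$, $\mathbf{N}_2$ from \cite{fkkv}. This reduces the new work to exactly five explicit parametrised bases (for $\mathcal{N}_5\to\mathcal{N}_2$, $\mathcal{N}_5\to\mathcal{N}_6^\beta$, $\mathcal{N}_2\to\mathcal{N}_3$, $\mathcal{N}_3\to\mathcal{N}_4$, $\mathcal{N}_3\to\mathfrak{g}_3^0$) and two non-degeneration arguments ($\mathcal{N}_2\not\to\mathcal{N}_6^\beta$ and $\mathcal{N}_5\not\to\mathfrak{g}_2,\mathfrak{g}_3^{\alpha\neq0}$). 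So your assessment that ``the refutations are the main obstacle'' and ``the bulk of the work'' is misplaced here: after the citations there are only two refutations left, and the $\mathcal{R}$ used for them are ad hoc linear conditions on structure constants (e.g.\ $c_{12}^2=c_{21}^2$, $2c_{12}^2+c_{13}^3+c_{31}^3=0$) rather than the coarser isomorphism invariants you list. Your plan would work if carried out in full, but it would re-prove large parts of \cite{ikv19} and \cite{fkkv}.
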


\begin{proof}
The subgraph of degenerations between anticommutative algebras, 
i.e., algebras ${\mathcal A}_i$ and $\mathfrak{g}_i,$ is given in \cite{ikv19}. 
The subgraph of degenerations between nilpotent algebras, 
i.e., algebras $\mathfrak{g}_1,$ $\mathcal{N}_1,$ $\mathcal{N}_4,$ $\mathcal N_{6}^\beta,$
$\rm N_1,$
$\rm N_2^\alpha,$
$\bf N_1$, and $\bf N_2,$ is given in \cite{fkkv}.
We aim to complete these subgraphs to the full graph of 
degenerations of $3$-dimensional nilalgebras.
The list of primary degenerations is given below.
\begin{center}

${\mathcal N}_{5} \xrightarrow{ (\frac{1}{t} e_3,\frac{1}{t^2}e_2+ t e_3,  - e_1)} {\mathcal N}_{2},$ \
${\mathcal N}_{5} \xrightarrow{ (2 t (\beta+t) e_1 + 2t^2(\beta+t)e_3,  4t^2(\beta+t) e_3, 
-2t(\beta+t)^2e_2+2 t(\beta+t)e_3)} {\mathcal N}_{6}^\beta,$ \
${\mathcal N}_{2} \xrightarrow{ (e_1-e_3, e_3,  t e_3)} {\mathcal N}_{3},$ \
${\mathcal N}_{3} \xrightarrow{ (te_1, t^2e_3, -t e_2+te_3)} {\mathcal N}_{4},$ \
${\mathcal N}_{3} \xrightarrow{ (-\frac{1}{t}e_2+e_3, \frac{1}{t} e_2, -t e_1)} \mathfrak{g}_3^0.$ \

\end{center}

The list of primary non-degenerations is given below

\begin{longtable}{lcl|l}
\hline
    \multicolumn{4}{c}{Non-degenerations reasons} \\
\hline

$\begin{array}{llll}
{\mathcal N}_{2} \\ 
\end{array}$ & $\not \rightarrow  $ & 

$\begin{array}{llll}
{\mathcal N}_{6}^\beta 
\end{array}$ 
& 
$\mathcal R=\left\{\begin{array}{lllll}
A_2^2=0, \ 
c_{12}^3+ c_{21}^3 =  c_{13}^2 + c_{31}^2=0

\end{array}\right\}
$\\\hline

$\begin{array}{llll}
{\mathcal N}_{5} \\ 
\end{array}$ & $\not \rightarrow  $ & 

$\begin{array}{llll}
{\mathfrak g}_{2},  \
{\mathfrak g}_{3}^{\alpha\neq 0}
\end{array}$ 
& 
$\mathcal R=\left\{\begin{array}{lllll}

A_1^2 \subseteq A_2, \ c_{12}^2 = c_{21}^2, \ 
c_{13}^2 = c_{31}^2, \ 
2 c_{12}^2 + c_{13}^3 + c_{31}^3=0 \

\end{array}\right\}
$\\\hline




 


\hline
\end{longtable}



\end{proof}

Let us remember that the variety of $3$-dimensional nilpotent algebras is irreducible and  defined by a rigid algebra, 
the variety of $n$-dimensional ($n>3$) nilpotent algebras is irreducible but does not have rigid algebras \cite{KKL}. 
The present observation and Theorem \ref{teo} gives the following question.

\noindent{\bf Open question.}
Are there rigid algebras in the variety of $n$-dimensional ($n>3$) nilalgebras with nilindex $k$?

\end{document}